\crefname{hypothesis}{Hypothesis}{Hypotheses}
\Crefname{ALC@unique}{Line}{Lines}
\newcommand{\leqnomode}{\tagsleft@true\let\veqno\@@leqno}
\newcommand{\reqnomode}{\tagsleft@false\let\veqno\@@eqno}
\DeclareMathOperator*{\du}{d\!}
\DeclareMathOperator{\bu}{{\boldsymbol{u}}}
\DeclareMathOperator{\blf}{\boldsymbol{f}}
\DeclareMathOperator{\bw}{\boldsymbol{w}}
\DeclareMathOperator{\bz}{\boldsymbol{z}}
\DeclareMathOperator{\bv}{\boldsymbol{v}}
\DeclareMathOperator{\bn}{\boldsymbol{n}}
\DeclareMathOperator{\bphi}{\boldsymbol{\varphi}}
\DeclareMathOperator{\bpsi}{\boldsymbol{\psi}}
\DeclareMathOperator{\ws}{\mathrel{\ensurestackMath{\stackon[1pt]{\rightharpoonup}{\scriptstyle\ast}}}}
\DeclareMathOperator{\chiarrow}{\mathrel{\ensurestackMath{\stackon[1pt]{\longrightarrow}{\chi}}}}
\newlength\normalparindent
\newenvironment{pethau}%
{\begin{list}{}%
    {%
      \setlength{\itemindent}{-2pt}%
      \setlength{\leftmargin}{20pt}%
      \setlength{\labelwidth}{.3\normalparindent}%
      \addtolength{\topsep}{-0.5\parskip}%
      \listparindent \normalparindent
      \setlength{\parsep}{\parskip}}}%
  {\end{list}}
\colorlet{texcscolor}{blue!50!black}
\colorlet{texemcolor}{red!70!black}
\colorlet{texpreamble}{red!70!black}
\colorlet{codebackground}{black!25!white!25}
\lstdefinestyle{siamlatex}{%
  style=tcblatex,
  texcsstyle=*\color{texcscolor},
  texcsstyle=[2]\color{texemcolor},
  keywordstyle=[2]\color{texemcolor},
  moretexcs={cref,Cref,maketitle,mathcal,text,headers,email,url},
}
\DeclareTotalTCBox{\code}{ v O{} }
{ 
  fontupper=\ttfamily\color{black},
  nobeforeafter,
  tcbox raise base,
  colback=codebackground,colframe=white,
  top=0pt,bottom=0pt,left=0mm,right=0mm,
  leftrule=0pt,rightrule=0pt,toprule=0mm,bottomrule=0mm,
  boxsep=0.5mm,
  #2}{#1}
\patchcmd\newpage{\vfil}{}{}{}
\title{Long-Time Behaviour of Shape Design Solutions for the Navier--Stokes Equations}
\author{John Sebastian H. Simon\thanks{Institute of Mathematics, 
              Czech Academy of Science, 
              {{\v Z}itn{\'a} 25 115 67 Praha 1}, {Czech Republic}} \\(\email{simon@math.cas.cz},\email{jhsimon1729@gmail.com}).} 
\title{Long-Time Behaviour of Shape Design Solutions for the Navier--Stokes Equations}
\author{John Sebastian H. Simon\thanks{Institute of Mathematics,
              Czech Academy of Science,
              {{\v Z}itn{\'a} 25 115 67 Praha 1}, {Czech Republic}} \\(\email{simon@math.cas.cz},\email{jhsimon1729@gmail.com}).}
\begin{document}
\maketitle

\begin{tcbverbatimwrite}{tmp_\jobname_abstract.tex}
\begin{abstract}
  We investigate the behavior of dynamic shape design problems for fluid flow at large time horizon.  In particular, we shall compare the shape solutions of a dynamic shape optimization problem with that of a stationary problem and show that the solution of the former approaches a neighborhood of that of the latter. The convergence of domains is based on the $L^\infty$-topology of their corresponding characteristic functions which is closed under the set of domains satisfying the {\it cone property}.  As a consequence, we show that the asymptotic convergence of shape solutions for parabolic/elliptic problems is a particular case of our analysis. Lastly, a numerical example is provided to show the occurrence of the convergence of shape design solutions of time-dependent problems with different values of the terminal time $T$ to a shape design solution of the stationary problem. 
\end{abstract}

\begin{keywords}
  Navier-Stokes equations,  long-time behaviour, shape design
\end{keywords}

\begin{AMS}
  49Q10, 49J20, 49K20, 35Q93
\end{AMS}
\end{tcbverbatimwrite}
\begin{abstract}
  We investigate the behavior of dynamic shape design problems for fluid flow at large time horizon.  In particular, we shall compare the shape solutions of a dynamic shape optimization problem with that of a stationary problem and show that the solution of the former approaches a neighborhood of that of the latter. The convergence of domains is based on the $L^\infty$-topology of their corresponding characteristic functions which is closed under the set of domains satisfying the {\it cone property}.  As a consequence, we show that the asymptotic convergence of shape solutions for parabolic/elliptic problems is a particular case of our analysis. Lastly, a numerical example is provided to show the occurrence of the convergence of shape design solutions of time-dependent problems with different values of the terminal time $T$ to a shape design solution of the stationary problem.
\end{abstract}

\begin{keywords}
  Navier-Stokes equations,  long-time behaviour, shape design
\end{keywords}

\begin{AMS}
  49Q10, 49J20, 49K20, 35Q93
\end{AMS}


\section{Introduction}
Shape design problems for fluid flow control captivate a vast number of mathematicians as well as engineers due to its applications in aeronautics, optimal mixing problems, and fluid-structure interaction problems, to name a few. Such problems are mostly formulated so as to optimize a given objective function constrained with the stationary fluid model, e.g. the stationary Stokes and Navier-Stokes equations. Among these studies are \cite{pironneau1974}, and \cite{pironneau2010} where the drag around a body in a fluid is minimized for viscous fluids. Z. Gao, et al. \cite{gao2008} on the other hand determined the gradient of the objective functional using three different methods, such as the use of Piola transform, minimax formulation, and the function space embedding technique. Aside from the mentioned references there is a pool of literature dealing with shape design problems constrained with the stationary fluid equations (see for example \cite{garcke2018}, \cite{haslinger2005}, \cite{haslinger2017}). In fact, majority of shape design problems involving fluid flow is governed by stationary equations, while there are only quite a few literature with the time-dependent case (see \cite{brandenburg2009}, \cite{yagi2005}, \cite{yagi2007}, \cite{gao2010}). 

One of the reasons for the disparity in the number of sources between the time-dependent and stationary problems is perhaps due to computational complexity of solving the dynamic systems. Aside from that, the assumption is that for long time horizons, the dynamic optimization problem generates a closely similar shape to that of the stationary problem. Such property has been proven to hold true for a lot of control problems and has been popularized as the {\it turnpike} property. The said property states that for large time optimal control problems,  the solutions may be divided into three periods -- the first and last periods are known to be short-time periods, and the middle part is known to steer the solutions (both the state and the controls)  to be exponentially close to the solutions of the stationary problem. See \cite{zaslavski2015, zaslavski2006} and the references therein for an extensive review of the turnpike property.

In the case of fluid flow control problems, a very few literature can be cited for the turnpike property. In fact, until 2018, the problem of proving the turnpike property for an optimal control problem constrained with the Navier--Stokes equations is open. S. Zamorano \cite{zamorano2018} proved the occurrence of turnpike to the control and the states. In the said reference, the author considered cases where the control is both time-dependent and independent. 

For shape design problems, on the other hand, turnpike property is mostly an open problem. Nevertheless, G. Lance, et al. \cite{lance2019} proved a weaker notion of the turnpike property and numerically illustrated that such phenomenon occurs for a shape optimization problem constrained with the heat equation for the time-dependent problem and the Poisson equation for the equilibrium problem.  

Another seminal result in long time behavior of shape design solutions is done by E. Trelat, et al. \cite{trelat2017}, where the authors showed that for the heat and Poisson equations, as the time horizon gets larger the shape solutions of the dynamic optimization problem asymptotically converges to that of the stationary problem. Furthermore, the authors established that the limiting domain - as the terminal time approaches infinity - converges to a domain that solves the stationary optimization problem. {We also mention the work done by G. Allaire et al., \cite{allaire2010} where they proved the convergence of solutions of optimal design problems constrained with the heat equation to a solution of an optimal design problem constrained with the stationary version of the said state. The optimal design problem was proposed to determine the best optimal way of arranging two conducting materials according to some physical properties. }

In this short note, we shall investigate the long time behavior of the solutions to shape optimization problems governed by the Navier-Stokes equations. Specifically, we shall study the asymptotic convergence of the solution of the shape optimization problem with instationary Navier--Stokes equations to the solution of the problem constrained with the stationary Navier--Stokes equations. To simplify the analysis, we assume that the fluid source function is not dependent on the time variable, and the optimization problem is formulated so as to steer the flow - in terms of the velocity and its gradient - into a prescribed profile. 

As opposed to the method used in \cite{trelat2017}, we shall not resort to the Hausdorff complement topology and the concept of $\Gamma$-convergence of domains, but instead we shall use the $L^\infty$-topology on the indicator functions of the domains and define the convergence of domains based on such topology. {Such topology was also used in \cite{munch2008} where the authors studied an optimal design problem that is set up the same way with that of \cite{allaire2010}. In particular the $L^\infty$-topology is used for the relaxed version of the optimization problem they considered.} Lastly, we shall show that when reduced to the Stokes equations, our results reflect the same results of E. Trelat et al.,\cite{trelat2017}.

Our exposition will be as follows: in the next section we shall introduce the shape optimization problems, and their governing state equations. The said state equations will be modified so as to take into account the possibility of simplifying them into the Stokes equations. Section \ref{section:3} is dedicated to the existence and uniqueness of solutions to the fluid equations, while we prove existence of shape solutions in Section \ref{section:4}.  In Section \ref{section:5}, we will prove our main results, then we shall provide a numerical illustration in Section \ref{section:6} where we will utilize traction methods for solving the deformation of domains. We leave some concluding remarks in the last section, as well as possible future directions.

\section{Shape Design Problems}
\label{section:2}
Let $\mathcal{D}\subset\mathbb{R}^2$ be a non-empty open bounded connected domain, and $\omega \subsetneq \mathcal{D}$, we consider the following set of admissible domains
\begin{align*}
	\mathcal{O}_{\omega} = &\left\{\Omega\subset \mathcal{D}: \Omega\supset\omega,\ \Omega\text{ is open, bounded, connected, and at least of class }C^{1,1} \right\}.
\end{align*}
We shall also consider a static fluid external force ${\blf}\in L^2(\mathcal{D};\mathbb{R}^2)$ for both stationary and time-dependent Navier--Stokes equations. In fact, for a given $\Omega\in \mathcal{O}_{\omega}$, we shall consider a dynamic equation on the interval $(0,T)$ given by
\begin{align}
	\left\{
	\begin{aligned}
		\partial_t{\bu} - \nu\Delta{\bu} + \gamma({\bu}\cdot\nabla){\bu} + \nabla p & = {\blf} && \text{in }\Omega\times (0,T),\\
		\nabla\cdot{\bu} & = 0 &&\text{in }\Omega\times (0,T),\\
		{\bu} & = 0 &&\text{in }\partial\Omega\times (0,T),\\
		{\bu} & = {\bu}_0 &&\text{in }\Omega\times\{0\}.
	\end{aligned}
	\right.
	\label{system:timedependent}
\end{align}
where ${\bu}$ and $p$ correspond to the dynamic fluid velocity and pressure, respectively,  and ${\bu}_0\in L^2(\mathcal{D};\mathbb{R}^2)$ is the initial velocity that satisfies $\nabla\cdot{\bu}_0 = 0$ in $\Omega$. On the other hand, the stationary Navier--Stokes equation is given by
\begin{align}
	\left\{
	\begin{aligned}
		- \nu\Delta{\bv} + \gamma({\bv}\cdot\nabla){\bv} + \nabla q & = {\blf} && \text{in }\Omega,\\
		\nabla\cdot{\bv} & = 0 &&\text{in }\Omega,\\
		{\bv} & = 0 &&\text{in }\partial\Omega.
	\end{aligned}
	\right.
	\label{system:stationary}
\end{align}
where ${\bv}$ and $q$ are the respective equilibrium fluid velocity and pressure. On both equations, $\nu>0$ denotes the fluid viscosity, and we call the parameter $\gamma\ge 0$ the convection constant.
We note that if $\gamma = 1$, we are dealing with the usual Navier--Stokes equations, while when $\gamma = 0$ then both states are reduced to the Stokes equation. 

Our intent is focused on analyzing two shape optimization problems governed by equations \eqref{system:timedependent} and \eqref{system:stationary}. In particular, for a given static desired velocity ${\bu}_D \in L^2(\omega;\mathbb{R}^2)$, we consider the time average problem given by
\begin{align}
	\left.
	\begin{aligned}
		\min_{\Omega\in \mathcal{O}_{\omega}} J_T(\Omega) := \frac{\nu}{T}&\int_0^T \|{\bu}(t) - {\bu}_D\|_{L^2(\omega;\mathbb{R}^2)}^2 + \|\nabla({\bu}(t) - {\bu}_D)\|_{L^2(\omega;\mathbb{R}^{2\times 2})}^2 \du t\\ \text{subject to }\eqref{system:timedependent},
	\end{aligned}
	\right.
	\label{problem:timedependent}
\end{align}
and the stationary shape design problem given by 
\begin{align}
	\left.
	\begin{aligned}
		\min_{\Omega\in \mathcal{O}_{\omega}} J_s(\Omega) :=\nu\big(\|{\bv} &- {\bu}_D\|_{L^2(\omega;\mathbb{R}^2)}^2 + \|\nabla({\bv} - {\bu}_D)\|_{L^2(\omega;\mathbb{R}^{2\times2})}^2\big)\\ \text{subject to }\eqref{system:stationary}.
	\end{aligned}
	\right.
	\label{problem:stationary}
\end{align}

Our goal - assuming for now that \eqref{problem:timedependent} and \eqref{problem:stationary} are well-posed, with $\Omega_T$ and $\Omega_s$ being their respective solutions - is to show that {
	\begin{align}
		|J_T^* - J_s^*| \le c \left( \frac{1}{T} + \frac{1}{\sqrt{T}} \right),
		\label{ineq:goal}
\end{align} }
where $J_T^* := J_T(\Omega_T)$, $J_s^*:= J_s(\Omega_s)$,  and the constant $c := c( {\bu}_0,{\bu}_D, {\blf}, 1/\nu, \mathcal{D} ) >0$ is independent of $T$. 

\begin{remark}
	Note that when $\gamma = 0$, both systems \eqref{system:timedependent} and \eqref{system:stationary} become the Stokes equations, and can be realized as parabolic and elliptic problems. In fact,  the asymptotic convergence coincides with that in \cite{trelat2017}.
\end{remark}

\section{Existence and uniqueness of state solutions}
\label{section:3}

In this section, we show the existence of the solutions - in weak sense - of systems \eqref{system:timedependent} and \eqref{system:stationary}. We begin by introducing the necessary function spaces. Let $X$ and $Y$ be normed spaces. We denote by $\mathcal{L}(X;Y)$ the set of continuous linear operators from $X$ to $Y$ with the norm $\|f\|_{\mathcal{L}(X;Y)} = \sup_{x\in X\backslash\{0\}}\frac{\|f(x)\|_Y}{\|x\|_X}$ for $f\in\mathcal{L}(X;Y)$.
The following spaces were already used in the previous section, but for the sake of completeness, we define them formally. For a measurable set $D\subset\mathbb{R}^2$, $d = 1,2,2\times2$ and $p\ge 1$, the space $L^p(D;\mathbb{R}^d)$ is the space of $p^{th}$ integrable functions from $D$ to $\mathbb{R}^d$, and the usual Sobolev spaces from $D$ to $\mathbb{R}^d$ are denoted by $W^{m,p}(D;\mathbb{R}^d)$ with $m \in\mathbb{N}$. For $p=2$, we use the notation $H^m(D;\mathbb{R}^d) = W^{m,p}(D;\mathbb{R}^d)$.  The space of the functions in $H^m(D;\mathbb{R}^d)$ whose traces on the boundary of $D$ are zero will be denoted as $H^m_0(D;\mathbb{R}^d)$ upon which the norm
\begin{align*}
	\|{\bu}\|_{H^m_0(D;\mathbb{R}^d)} = \left(\sum_{|\alpha| = m}\int_D |\partial^\alpha{\bu}|^2 \du x \right)^{\!1/2}
\end{align*}
is endowed, where $\alpha$ is a multi-index, and the partial derivatives $\partial^\alpha{\bu}$ are understood in the sense of distributions. 

For a domain $\Omega\in \mathcal{O}_{\omega}$, to take into account the divergence-free property of the fluid velocities, we consider the following spaces 
\begin{align*}
	& V(\Omega) := \{{\bu}\in H^1_0(\Omega;\mathbb{R}^2): \nabla\cdot{\bu} = 0\text{ in }L^2(\Omega;\mathbb{R})  \},\\
	& H(\Omega) := \{{\bu}\in L^2(\Omega;\mathbb{R}^2): \nabla\cdot{\bu} = 0\text{ in }L^2(\Omega;\mathbb{R}),{\bu}\cdot{\bn} = 0 \text{ on }\partial\Omega  \}.
\end{align*}

We denote by $V^*(\Omega)$ the dual space of $V(\Omega)$, with the norm 
\[
\|{\bu}\|_{V^*(\Omega)} = \sup_{{\bv} \in V(\Omega)\backslash\{0\}}\frac{{}_{V^*(\Omega)}\langle {\bu},{\bv}\rangle_{V(\Omega)}}{\|{\bv}\|_{V(\Omega)}},
\]
where ${}_{V^*(\Omega)}\langle {\bu},{\bv}\rangle_{V(\Omega)}$ corresponds to the duality pairing of elements of $V^*(\Omega)$ and $V(\Omega)$. For the pressure term, we consider the space $L^2_0(\Omega;\mathbb{R}) = \{ q\in  L^2(\Omega;\mathbb{R}): \int_\Omega q\du x = 0   \}$. We also use the notation $(\cdot,\cdot)_\Omega$ for the $L^2(\Omega;\mathbb{R}^d)$ inner product, where $d = 1,2,2\times2$.

The time-dependent problems will be analyzed by virtue of the following spaces of Banach valued functions.  For a terminal time $T>0$ and a real Banach space $X$, we denote the space of continuous functions from $I = (0,T)$ to $X$ by $C(I;X)$ with the norm $\sup_{t} \|u(t) \|_X$.  The Bochner space $L^p(I;X)$ for $p\ge 1$ is also considered with the norm 
\begin{align*}
	\|u\|_{L^p(I;X)} = \left\{ \begin{aligned} 
		&\mathrm{ess}\sup_{t\in I}\|u(t)\|_X &&\text{for }p=\infty,\\
		&\left(\int_I \|u(t)\|_X^p \du t \right)^{\!1/p} &&\text{otherwise}.
	\end{aligned} \right.
\end{align*}

We also consider the space \[ W^p(\Omega):= \{{\bu} \in L^2(I;V(\Omega)); \partial_t{\bu}\in L^p(I;{V}^*(\Omega)) \}\] which is compactly embedded to $L^p(I;H(\Omega))$, and we have the following inclusion $W^2(\Omega)\subset C(\overline{I};H(\Omega))$.

Suppose that ${\bu}_0\in H(\Omega)$, we call ${\bu} \in L^\infty(I;H(\Omega))\cap W^2(\Omega)$ a {\it weak solution} of \eqref{system:timedependent} if it satisfies 
\begin{align}
	\begin{aligned}
		{}_{V^*(\Omega)}\langle \partial_t{\bu}(t), {\bphi}\rangle_{V(\Omega)} + \nu(\nabla{\bu}(t),\nabla{\bphi})_\Omega + \gamma(({\bu}(t)\cdot\nabla){\bu}(t),{\bphi})_{\Omega} &= ({\blf},{\bphi})_{\Omega}
	\end{aligned}
	\label{weak:timedependent}
\end{align}
for all ${\bphi}\in V(\Omega)$, a.e.  $t\in (0,T)$, and ${\bu}(0) = {\bu}_0$ in $H(\Omega)$. We note that the pointwise evaluation at $t = 0$ makes sense due to the inclusion $W^2(\Omega)\subset C(\overline{I};H(\Omega))$.

Meanwhile, ${\bv}\in V(\Omega)$ is a {\it weak solution} of \eqref{system:stationary} if the following equation holds true
\begin{align}
	\nu(\nabla{\bv},\nabla{\bphi})_{\Omega} + \gamma(({\bv}\cdot\nabla){\bv},{\bphi})_{\Omega} = ({\blf},{\bphi})_{\Omega},
	\label{weak:stationary}
\end{align}
for all ${\bphi}\in V(\Omega)$.

The existence of such weak solutions has been well-established (see for example \cite{girault1986},\cite{temam1977}). Nevertheless, we present the said results in the following theorem. 
\begin{theorem}
	Let $\Omega \in \mathcal{O}_{\omega}$ and ${\blf}\in L^2(\mathcal{D};\mathbb{R}^2)$.
	\begin{itemize}
		\item[i)] If ${\bu}_0\in H(\Omega)\cap L^2(\mathcal{D};\mathbb{R}^2)$, then the weak solution ${\bu} \in L^\infty(I;H(\Omega))\cap W^2(\Omega) $ of \eqref{system:timedependent} exists and satisfies the following estimates
		\begin{align}
			&\begin{aligned}
				\|{\bu}\|_{L^\infty(I;H(\Omega))} \le c_1\left(\sqrt{\frac{T}{\nu}}\|{\blf}\|_{L^2(\mathcal{D};\mathbb{R}^2)} + \|{\bu}_0\|_{L^2(\mathcal{D};\mathbb{R}^2)} \right), \label{estimate:Linf}
			\end{aligned}\\
			&\begin{aligned}
				\|{\bu}\|_{L^2((0,t);V(\Omega))}^2 \le c_2\left(\frac{t}{\nu^2}\|{\blf}\|_{L^2(\mathcal{D};\mathbb{R}^2)}^2 + \frac{1}{\nu}\|{\bu}_0\|_{L^2(\mathcal{D};\mathbb{R}^2)}^2 \right),\quad \forall t\in[0,T]\label{estimate:L2}
			\end{aligned}
		\end{align}
		where $c_1,c_2>0$ are constants independent of $\Omega$ and $T$.
		\item[ii)] There exists a weak solution ${\bv}\in V(\Omega)$ of \eqref{system:stationary} which satisfies 
		\begin{align}
			\|{\bv}\|_{V(\Omega)} \le \frac{\tilde c}{\nu}\|{\blf}\|_{L^2(\mathcal{D};\mathbb{R}^2)},
			\label{estimate:stationary}
		\end{align}
		where $\tilde c>0$ is a constant which can be chosen to be independent of $\Omega$. Furthermore, if we assume that $2^{1/2}\tilde{c}^2\gamma\|{\blf}\|_{L^2(\mathcal{D};\mathbb{R}^2)} < \nu^2$, where $c>0$ is the same as in \eqref{estimate:stationary}, then the solution is unique.
	\end{itemize}
	\label{theorem:statewp}
\end{theorem}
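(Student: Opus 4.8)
The plan is to establish both parts by the Galerkin method, reading off the stated estimates directly from the energy identities while tracking the dependence on $T$ and $\nu$.

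For part~(i), I would fix a basis $\{\bw_k\}_{k\ge 1}$ of $V(\Omega)$ --- for instance the eigenfunctions of the Stokes operator, which are divergence-free and hence lie in $V(\Omega)$ --- and seek approximate solutions $\bu_m(t)=\sum_{k=1}^{m} g_{mk}(t)\,\bw_k$ satisfying the projection of \eqref{weak:timedependent} onto $\mathrm{span}\{\bw_1,\dots,\bw_m\}$. This is a locally Lipschitz system of ordinary differential equations for the coefficients $g_{mk}$, so standard theory gives local-in-time solvability, and the a~priori bound below extends the solution to all of $[0,T]$. The decisive algebraic fact is that the convection trilinear term vanishes on the diagonal, $(({\bu_m}\cdot\nabla)\bu_m,\bu_m)_\Omega=0$, because $\bu_m$ is divergence-free with zero trace; testing the approximate equation with $\bu_m$ therefore yields the energy identity
\begin{align*}
	\frac{1}{2}\frac{\mathrm{d}}{\mathrm{d}t}\|\bu_m(t)\|_{L^2(\Omega;\mathbb{R}^2)}^2 + \nu\,\|\nabla\bu_m(t)\|_{L^2(\Omega;\mathbb{R}^{2\times 2})}^2 = ({\blf},\bu_m(t))_\Omega.
\end{align*}

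Next I would absorb the right-hand side by Young's and Poincar\'e's inequalities, $({\blf},\bu_m)_\Omega \le \tfrac{1}{\nu}\|{\blf}\|_{L^2(\Omega;\mathbb{R}^2)}^2 + \tfrac{\nu}{4}\|\nabla\bu_m\|^2$, and integrate in time. Since ${\blf}$ is static one has $\int_0^t\|{\blf}\|^2\,\du s = t\,\|{\blf}\|^2$, which is precisely what produces the $t$-linear weight together with the powers $1/\nu$ and $1/\nu^2$ seen in \eqref{estimate:Linf} and \eqref{estimate:L2}; taking the supremum over $t\in[0,T]$ gives the $L^\infty(I;H(\Omega))$ bound and the leftover integral gives the $L^2((0,t);V(\Omega))$ bound. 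A bound on $\partial_t\bu_m$ in $L^p(I;V^*(\Omega))$ follows by inserting an arbitrary $\bphi\in V(\Omega)$ and estimating each term of \eqref{weak:timedependent}, the nonlinear one through the two-dimensional embedding $V(\Omega)\hookrightarrow L^4(\Omega;\mathbb{R}^2)$. These uniform bounds furnish, by weak and weak-$\ast$ compactness, a limit $\bu$; the Aubin--Lions lemma supplies the strong $L^2(I;H(\Omega))$ convergence needed to pass to the limit in the convection term, and lower semicontinuity of the norms preserves \eqref{estimate:Linf}--\eqref{estimate:L2} for $\bu$.

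For part~(ii) the same testing strategy applies, but at the finite-dimensional level I would obtain $\bv_m$ from Brouwer's fixed-point theorem applied to the Galerkin map, there being no time variable to integrate. Testing with $\bv_m$ and again discarding the vanishing convection term gives $\nu\|\nabla\bv_m\|^2=({\blf},\bv_m)_\Omega$, whence $\|\bv_m\|_{V(\Omega)}\le \tfrac{\tilde c}{\nu}\|{\blf}\|_{L^2(\mathcal{D};\mathbb{R}^2)}$ by Poincar\'e, passing to the limit as \eqref{estimate:stationary}. For uniqueness, I would subtract the weak formulations of two solutions $\bv_1,\bv_2$ and test with $\bw:=\bv_1-\bv_2$; writing the convection difference as $(({\bw}\cdot\nabla)\bv_1,\bw)_\Omega+(({\bv_2}\cdot\nabla)\bw,\bw)_\Omega$ and dropping the second term (which vanishes), the two-dimensional interpolation inequality together with \eqref{estimate:stationary} bounds the remaining trilinear term by a multiple of $\gamma\|{\blf}\|\nu^{-1}\|\bw\|_{V(\Omega)}^2$. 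The hypothesis $2^{1/2}\tilde c^2\gamma\|{\blf}\|_{L^2(\mathcal{D};\mathbb{R}^2)}<\nu^2$ is exactly what makes this smaller than $\nu\|\bw\|_{V(\Omega)}^2$, forcing $\bw=0$.

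The step that I expect to demand the most care is verifying that the constants $c_1,c_2,\tilde c$ --- and the Poincar\'e and $L^4$ embedding constants feeding into them --- can be chosen independently of the domain $\Omega$. The clean device is to extend every element of $V(\Omega)$ by zero to the fixed ambient set $\mathcal{D}$: the extension belongs to $H^1_0(\mathcal{D};\mathbb{R}^2)$ with the same gradient norm, so the Poincar\'e constant of $\mathcal{D}$ and the embedding constant of $H^1_0(\mathcal{D};\mathbb{R}^2)\hookrightarrow L^4(\mathcal{D};\mathbb{R}^2)$ serve as uniform bounds over the entire admissible class $\mathcal{O}_{\omega}$. This is what keeps $c_1,c_2$ independent of both $\Omega$ and $T$, as claimed.
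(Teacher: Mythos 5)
Your proposal is correct and, for part~(i), follows essentially the same route as the paper: a Galerkin scheme built on the Stokes eigenfunctions, the cancellation $(({\bu}_m\cdot\nabla){\bu}_m,{\bu}_m)_\Omega=0$, Young/Poincar\'e and integration in time to obtain \eqref{estimate:Linf}--\eqref{estimate:L2} with the stated $t/\nu$ and $t/\nu^2$ weights, a uniform bound on $\partial_t{\bu}_m$ in $L^2(I;V^*(\Omega))$, and Aubin--Lions compactness to pass to the limit in the convection term. Two points of divergence are worth recording. First, for the stationary problem the paper does not run a Brouwer fixed-point argument on the Galerkin system; it instead invokes an abstract existence theorem for the operator $\mathbb{A}({\bv},{\bphi})=\nu(\nabla{\bv},\nabla{\bphi})_\Omega+\gamma(({\bv}\cdot\nabla){\bv},{\bphi})_\Omega$, verifying coercivity ($\mathbb{A}({\bv},{\bv})=\nu\|{\bv}\|_{V(\Omega)}^2$) and weak sequential continuity of $\mathbb{A}(\cdot,{\bphi})$ via Rellich--Kondrachov. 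Both mechanisms are standard and yield the same conclusion; yours is the more self-contained, the paper's the shorter. Your extension-by-zero device for making the Poincar\'e and $L^4$-embedding constants depend only on $\mathcal{D}$ is cleaner than the paper's appeal to Faber--Krahn in its Remark~3.2, and it is exactly what justifies the claim that $c_1,c_2,\tilde c$ are independent of $\Omega$. Second, and this is the one genuine omission: the theorem speaks of \emph{the} weak solution of \eqref{system:timedependent}, and the paper proves uniqueness of the evolution solution by subtracting two solutions, testing with the difference $\tilde{\bu}$, absorbing the term $2^{3/2}\gamma\|\tilde{\bu}\|_{H(\Omega)}\|\tilde{\bu}\|_{V(\Omega)}\|{\bu}_2\|_{V(\Omega)}$ into the dissipation by Young's inequality, and applying Gronwall with the integrable weight $\|{\bu}_2(\cdot)\|_{V(\Omega)}^2\in L^1(0,T)$. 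Your proposal proves uniqueness only for the stationary problem (correctly, and by the same computation as the paper, with the smallness condition $2^{1/2}\tilde c^2\gamma\|{\blf}\|_{L^2(\mathcal{D};\mathbb{R}^2)}<\nu^2$ entering exactly as you say); you should add the two-dimensional Lions--Prodi uniqueness argument for the time-dependent equation, which requires no smallness assumption and is needed later when the paper works with \emph{the} solution ${\bu}_s$ on $\Omega_s$.
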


\begin{proof}
	{
		The proof of existence and uniqueness is a routine procedure and can be easily done. Nonetheless, we lay such proof here for completion. }
	
	{
		Let us begin with an orthonormal eigenbasis $\{{\bphi}_k \}\subset V(\Omega)$ of the space $H(\Omega)$ which consists of eigenfunctions for the Stokes operator, which are also orthogonal in $V(\Omega)$ and so that ${\bu} = \sum_{k=1}^\infty \alpha_k {\bphi}_k $ for all ${\bu}\in H(\Omega)$, where $\alpha_k$ are known as the Fourier coefficients.  Associated with such eigenfunctions are the eigenvalues $0<\lambda_1\le \lambda_2 \le \cdots \le \lambda_k \to \infty$ as $k\to\infty$ from whence we implore that
		\begin{align*}
			(\nabla{\bphi}_k,\nabla{\bv})_\Omega = \lambda_k({\bphi}_k,{\bv})_\Omega, \quad\|{\bv}\|_{H(\Omega)} = \left( \sum_{k=1}^\infty\alpha_k^2 \right)^{1/2}\text{ and }\|{\bv}\|_{V(\Omega)} = \left( \sum_{k=1}^\infty \lambda_k\alpha_k^2 \right)^{1/2} \quad \forall {\bv}\in V(\Omega).
		\end{align*}
	}
	
	{
		Let us first establish the existence of solutions to \eqref{weak:timedependent}. 
		We start with projecting such problem into the finite-dimensional space $V_m := \textrm{span}\{{\bphi}\}_{k=1}^m$.
		In particular, we solve for ${\bu}_m(t) = \sum_{k=1}^m\alpha_k(t){\bphi}_k \in V_m$ that solves
		\begin{align}
			{}_{V^*(\Omega)}\langle  \partial_t{\bu}_m(t), {\bphi}_j \rangle_{V(\Omega)}+ \nu(\nabla{\bu}_m(t),\nabla{\bphi}_j)_\Omega + \gamma(({\bu}_m(t)\cdot\nabla){\bu}_m(t),{\bphi}_j)_{\Omega} &= ({\blf},{\bphi}_j)_{\Omega}\quad \forall j=1,2,\ldots,m,
			\label{weak:discretetime}
		\end{align}
		and satisfies ${\bu}_m(0) = {\bu}_{0m} := \sum_{k=1}^m({\bu}_0,{\bphi}_k)_\Omega{\bphi}_k \in V_m$.
		Due to the orthonormality of $\{{\bphi}\}_{k=1}^m$,  \eqref{weak:discretetime} can be rewritten as the initial value problem 
		\begin{align}
			\alpha_j'(t) + \nu\lambda_j\alpha_j(t) + \gamma\sum_{l,k=1}^m \beta_{l k j}\alpha_{l}(t)\alpha_k(t) &= ({\blf},{\bphi}_j)_{\Omega},\quad \alpha_j(0) =({\bu}_0,{\bphi}_j)_\Omega, 
			\label{ode:discretetime}
		\end{align}
		for all $j=1,2,\ldots,m$, where $ \beta_{l k j} = (({\bphi}_l\cdot\nabla){\bphi}_k,{\bphi}_j	)_\Omega$ which is zero whenever $k=j$. 
		Using usual arguments for existence of solutions of initial value problems we infer the existence of $\alpha_j\in H^1([0,t_j];\mathbb{R})$ that solves \eqref{ode:discretetime}, which also implies the existence of $ {\bu}_m\in H^1([0,t_m]; V_m) $ for some $t_m>0$ . 
		If $t_m<T$ one expects $\|{\bu}_m\|_{H(\Omega)}\to +\infty$ as $t\to t_m$. 
		Thankfully, the upcoming estimate prove otherwise.}
	
	{
		The promised estimate is achieved by first multiplying both sides of \eqref{ode:discretetime} by $2\alpha_j(t)$, adding the resulting products for all $j=1,2,\ldots,m$, then integrating the resulting product over the interval $[0,t]$, for $t\le T$.  Before we show the result of such steps, we note that $2\sum_{j=1}^m\alpha_j'(t)\alpha_j(t) = \frac{d}{dt}\|{\bu}_m(t)\|_{H(\Omega)}^2$, $2\sum_{j=1}^m\alpha_j(t)\alpha_j(t)=2\|{\bu}_m(t)\|_{V(\Omega)}^2$, and $\sum_{j=1}^m\sum_{l,k=1}^m \beta_{l k j}\alpha_{l}(t)\alpha_k(t)\alpha_j(t) = 0$, and $2\sum_{j=1}^m ({\blf},{\bphi}_j)_\Omega\alpha_j(t) = 2 ({\blf},{\bu}_m(t))_\Omega$. Indeed, the first two steps implies
		\begin{align*}
			\frac{d}{dt} \|{\bu}_m(t)\|_{H(\Omega)}^2 + 2\nu \|{\bu}_m(t)\|_{V(\Omega)}^2 = 2 ({\blf},{\bu}_m(t))_\Omega.
		\end{align*}
		The last step, i.e., taking the integral over $(0,t)$, together with Young's and H{\"o}lder's inequalities imply that
		\begin{align}
			\|{\bu}_m(t)\|_{H(\Omega)}^2 + \nu \int_0^t  \|{\bu}_m(s)\|_{V(\Omega)}^2 \du s \le \|{\bu}_{0m}\|_{H(\Omega)}^2 + \frac{t}{\nu}\|{\blf}\|_{L^2(\Omega;\mathbb{R}^2)} \le \|{\bu}_{0}\|_{H(\Omega)}^2 + \frac{t}{\nu}\|{\blf}\|_{L^2(\Omega;\mathbb{R}^2)}.
			\label{estimate:finitesolution}
		\end{align}
		The last inequality is achieved from the fact that $\|{\bu}_{0m}\|_{H(\Omega)}\le \|{\bu}_{0}\|_{H(\Omega)} $. 
		Taking the supremum of both sides of \eqref{estimate:finitesolution} over the interval $[0,T]$ one gets $ \sup_{t\in [0,T]}\|{\bu}_m(t)\|_{H(\Omega)}^2\le \|{\bu}_{0}\|_{H(\Omega)}^2 + \frac{T}{\nu}\|{\blf}\|_{L^2(\Omega;\mathbb{R}^2)} $, and thus $t_m = T$. }
	
	{
		Aside from establishing that $ {\bu}_m\in H^1(I; V_m) $, \eqref{estimate:finitesolution} implies that $\|{\bu}_m\|_{L^\infty(I;H(\Omega))}$ and $\|{\bu}_m\|_{L^2(I;V(\Omega))}$ are uniformly bounded, from which we infer that 
		\begin{align}
			& {\bu}_m\ws {\bu} \text{ in }L^\infty(I;H(\Omega)),\label{conv:linf}\\
			& {\bu}_m\rightharpoonup {\bu} \text{ in }L^2(I;V(\Omega)),\label{conv:l2V}
		\end{align}
		for some ${\bu}\in L^\infty(I;H(\Omega))\cap L^2(I;V(\Omega))$.  Of course, one would expect such element to be of the form ${\bu} = \sum_{k=1}^\infty \alpha_{k}(t){\bphi}_k$ due to uniqueness of limits, the ansatz used for the element ${\bu}_m$ and the spectral representation of the spaces $V(\Omega)$ and $H(\Omega)$.  }
	
	{
		The next step is to pass the limits through \eqref{weak:discretetime}. However \eqref{conv:linf} and \eqref{conv:l2V} are insufficient due to the nonlinearity of the state equations. What we need is a stronger sense of convergence.  
		To arrive at such convergence, we use the fact that $W^2(\Omega)$ is  compactly embedded to $L^2(I;H(\Omega))$.
		So, what we need to do now is establish that the sequence $\{\partial_t{\bu}_m \}$ is uniformly bounded in $L^2(I;V^*(\Omega))$. 
		Integrating \eqref{weak:discretetime} over the interval $(0,T)$ and by using Poincare and H{\"o}lder inequalities, we see that
		\begin{align*}
			\int_0^T {}_{V^*(\Omega)}\langle  \partial_t{\bu}_m(t), {\bphi}_j \rangle_{V(\Omega)} \du t \le c\left( T\|{\blf}\|_{L^2(\Omega;\mathbb{R}^2) } + (\nu + \gamma\|{\bu}_m\|_{L^2(I;V(\Omega)) } )\|{\bu}_m\|_{L^2(I;V(\Omega)) }  \right)\|{\bphi}_j\|_{V(\Omega)}\quad \forall j=1,2,\ldots,m.
		\end{align*}
		for some constant $c>0$. Since $L^2(I;V_m)\subset L^2(I;V^*(\Omega))$, dividing both sides by $\|{\bphi}_j\|_{V(\Omega)}$ yields
		\begin{align*}
			\|\partial_t{\bu}_m\|_{L^2(I;V^*(\Omega))} \le c\left( T\|{\blf}\|_{L^2(\Omega;\mathbb{R}^2) } + (\nu + \gamma\|{\bu}_m\|_{L^2(I;V(\Omega)) } )\|{\bu}_m\|_{L^2(I;V(\Omega)) }  \right).
		\end{align*}
		From  \eqref{estimate:finitesolution}, we finally get the uniform boundedness of $\{\|\partial_t{\bu}_m\|_{L^2(I;V^*(\Omega))}\}_m$. From this, we get the following convergences:
		\begin{align}
			\partial_t{\bu}_m\rightharpoonup \partial_t{\bu}& \text{ in }L^2(I;V^*(\Omega)),\label{conv:l2Vdual}\\	
			{\bu}_m\to {\bu}& \text{ in }L^2(I;H(\Omega)),\label{conv:l2H}
	\end{align}}
	
	{
		Passing the limits on the bilinear terms are quite straightforward, in fact taking $m\to\infty$ we get that
		\begin{align*}
			{}_{V^*(\Omega)}\langle  \partial_t{\bu}_m(t), {\bphi}_j \rangle_{V(\Omega)}+ \nu(\nabla{\bu}_m(t),\nabla{\bphi}_j)_\Omega -  ({\blf},{\bphi}_j)_{\Omega} \to {}_{V^*(\Omega)}\langle  \partial_t{\bu}(t), {\bphi}_j \rangle_{V(\Omega)}+ \nu(\nabla{\bu}(t),\nabla{\bphi}_j)_\Omega -  ({\blf},{\bphi}_j)_{\Omega} 
		\end{align*}
		for a.e. $t\in (0,T)$ and the right-hand side holds for all $j\in\mathbb{N}$.}
	
	{
		What remains for us to show is that $I_m := | \int_0^T (({\bu}_m(t)\cdot\nabla){\bu}_m(t),{\bphi}_j)_{\Omega} - (({\bu}(t)\cdot\nabla){\bu}(t),{\bphi}_j)_{\Omega} \du t |\to 0$. To do this, we recall some properties of the trilinear form. Given ${\bu},{\bv},{\bw}\in V(\Omega)$, we have the following:
		\begin{align}
			&\bullet\quad (({\bu}\cdot\nabla){\bv},{\bw})_{\Omega} = - (({\bu}\cdot\nabla){\bw},{\bv})_{\Omega},\label{trilinear:commute}\\
			&\bullet\quad|(({\bu}\cdot\nabla){\bv},{\bw})_{\Omega}| \le 2^{1/2}\|{\bu}\|_{H(\Omega)}^{1/2}\|{\bu}\|_{V(\Omega)}^{1/2}\|{\bv}\|_{V(\Omega)}\|{\bw}\|_{H(\Omega)}^{1/2}\|{\bw}\|_{V(\Omega)}^{1/2},\text{ for some constant }c>0.\label{trilinear:ladyzhenskaya}
		\end{align}
		The following computation then establishes such convergence:
		\begin{align*}
			I_m \le& \left| \int_0^T ((({\bu}_m(t) -{\bu}(t)) \cdot\nabla){\bu}_m(t),{\bphi}_j)_{\Omega}  \du t \right|+ \left| \int_0^T (({\bu}(t)\cdot\nabla){\bphi}_j,{\bu}_m(t)-{\bu}(t))_{\Omega}\du t \right|\\
			& \le \|{\bphi}_j\|_{V(\Omega)} \int_0^T \|{\bu}_m(t) -{\bu}(t)\|_{H(\Omega)}^{1/2}\| {\bu}_m(t) -  {\bu}(t)\|_{V(\Omega)}^{1/2}(\|{\bu}_m(t)\|_{V(\Omega)}+\|{\bu}(t)\|_{V(\Omega)})\du t\\
			& \le c \|{\bphi}_j\|_{V(\Omega)}\|{\bu}_m - {\bu}\|_{L^2(I;H(\Omega))}^{1/2}(\|{\bu}_m\|_{L^2(I;V(\Omega))}^{3/2} + \|{\bu}\|_{L^2(I;V(\Omega))}^{3/2}).
		\end{align*}
		From the recently established strong convergence in $L^2(I;H(\Omega))$, $I_m$ indeed converges to zero as $m\to \infty$ and such convergence holds for all $j\in \mathbb{N}$.}
	
	{
		From these, we see that such ${\bu}\in L^\infty(I;H(\Omega))\cap W^2(\Omega)$ solves \eqref{weak:timedependent} for all ${\bphi}\in V(\Omega)$ and for a.e. $t\in (0,T)$. 
		Furthermore, since $W^2(\Omega)\subset C(\overline{I};H(\Omega))$, ${\bu}(0) = \lim_{m\to \infty}{\bu}_m(0)$ in $H(\Omega)$. Because $\lim_{m\to\infty}{\bu}_m(0) = \sum_{k=1}^\infty ({\bu}_0,{\bphi}_k){\bphi}_k = {\bu}_0$, we infer that ${\bu}(0) ={\bu}_0$ in $H(\Omega)$ and therefore ${\bu}\in L^\infty(I;H(\Omega))\cap W^2(\Omega)$ is a weak solution of \eqref{system:timedependent}.}
	
	{
		For the uniqueness, we assume that we have two solutions ${\bu}_1,{\bu}_2\in L^\infty(I;H(\Omega))\cap W^2(\Omega)$. The element $\tilde{\bu} = {\bu}_1-{\bu}_2 \in L^\infty(I;H(\Omega))\cap W^2(\Omega)$ satisfies the initial condition $\tilde{\bu}(0) = 0$ and the equation 
		\begin{align}
			\begin{aligned}
				{}_{V^*(\Omega)}\langle \partial_t\tilde{\bu}(t), {\bphi}\rangle_{V(\Omega)} + \nu(\nabla\tilde{\bu}(t),\nabla{\bphi})_\Omega  &=  - \gamma((\tilde{\bu}(t)\cdot\nabla){\bu}_2(t),{\bphi})_{\Omega},\quad \forall {\bphi}\in V(\Omega)
			\end{aligned}
			\label{weak:timedependentdiff}
		\end{align}
		for a.e. $t\in (0,T)$. Substituting ${\bphi} = \tilde{\bu}(t)$ into \eqref{weak:timedependentdiff} and utilizing Young inequality then yield
		\begin{align*}
			\frac{d}{dt}\|\tilde{\bu}(t)\|_{H(\Omega)}^2 + 2\nu \|\tilde{\bu}(t)\|_{V(\Omega)}^2 & \le 2^{3/2}\gamma \|\tilde{\bu}(t)\|_{H(\Omega)}\|\tilde{\bu}(t)\|_{V(\Omega)}\|{\bu}_2(t)\|_{V(\Omega)}\\
			& \le 2\nu \|\tilde{\bu}(t)\|_{V(\Omega)}^2 + c\|\tilde{\bu}(t)\|_{H(\Omega)}^2\|{\bu}_2(t)\|_{V(\Omega)}^2.
		\end{align*}
		The inequality above can then be simplified as
		\begin{align*}
			\frac{d}{dt}\|\tilde{\bu}(t)\|_{H(\Omega)}^2 \le c\|\tilde{\bu}(t)\|_{H(\Omega)}^2\|{\bu}_2(t)\|_{V(\Omega)}^2.
		\end{align*}
		Because ${\bu}_2\in L^2(I;V)$, we can use a Gronwall inequality to arrive at $\|\tilde{\bu}(t)\|_{H(\Omega)}^2 \le 0$ for all $t\in (0,T)$ which implies the uniqueness of the solution to \eqref{weak:timedependent}.}
	
	{
		We now perform diagonal testing on \eqref{weak:timedependent} by taking ${\bphi} = {\bu}(t)$.  This results to
		\begin{align*}
			\|{\bu}(t)\|_{H(\Omega)}^2 + \nu\int_0^t \|{\bu}(t)\|_{V(\Omega)}^2\du t \le c\left( \|{\bu}_0\|_{L^2(\mathcal{D};\mathbb{R}^2)}^2 + \frac{t}{\nu}\|{\blf}\|_{L^2(\mathcal{D};\mathbb{R}^2)}^2 \right).
		\end{align*}
		Focusing on the first term on the left-hand side gives us \eqref{estimate:Linf}, while the second term yields \eqref{estimate:L2}.}
	
	{
		The solution to the stationary problem \eqref{weak:stationary} can be shown to exist in a more straightforward manner. It anchors on the fact that the operator $A:V(\Omega)\times V(\Omega) \to \mathbb{R}$ defined as $\mathbb{A}({\bv},{\bphi}) = \nu(\nabla{\bv},\nabla{\bphi})_{\Omega} + \gamma(({\bv}\cdot\nabla){\bv},{\bphi})_{\Omega} $ is coercive in $V(\Omega)$, that is there exists $\alpha >0$ such that $\mathbb{A}({\bv},{\bv}) \ge \alpha\|{\bv}\|_{V(\Omega)}^2$ for all ${\bv}\in V(\Omega)$, and that $\mathbb{A}(\cdot,{\bphi})$ is weakly sequential continuous in $V(\Omega)$ for any ${\bphi}\in V(\Omega)$, i.e., if $\{{\bv}_n\}_n\subset V(\Omega)$ and ${\bv}\in V(\Omega)$ are such that ${\bv}_n\rightharpoonup {\bv}$ in $V(\Omega)$ then $\mathbb{A}({\bv}_n,{\bphi})\to \mathbb{A}({\bv},{\bphi})$. The proof of the two properties are shown below.}
	
	{
		\noindent\textbf{1. Coercivity of $\mathbb{A}$}. Since $(({\bv}\cdot\nabla){\bv},{\bv})_\Omega = 0$, we have $\mathbb{A}({\bv},{\bv}) = \nu \|{\bv}\|_{V(\Omega)}^2$. The coercivity constant is then chosen as $\alpha = \nu$.}
	
	{
		\noindent\textbf{2. Weakly sequential continuity of $\mathbb{A}(\cdot,{\bphi})$}. Let $\{{\bv}_n\}_n\subset V(\Omega)$ and ${\bv}\in V(\Omega)$ be such that ${\bv}_n\rightharpoonup {\bv}$ in $V(\Omega)$. From Rellich-Kondrachov embedding theorem, ${\bv}_n\to {\bv}$ in $H(\Omega)$.  From H{\"o}lder inequality, we thus get
		\begin{align*}
			|\mathbb{A}({\bv}_n,{\bphi}) - \mathbb{A}({\bv},{\bphi})| & \le\nu | (\nabla({\bv}_n - {\bv}),\nabla{\bphi} )_\Omega| + \gamma |(( ({\bu}_n-{\bu})\cdot\nabla ){\bu}_n,{\bphi} )_\Omega| +\gamma  |(( {\bu}\cdot\nabla ){\bphi} , {\bu}_n-{\bu})_\Omega |\\
			& \le \nu | (\nabla({\bv}_n - {\bv}),\nabla{\bphi} )_\Omega| + c\gamma(\|{\bv}_n\|_{V(\Omega)}+ \|{\bv}\|_{V(\Omega)})\|{\bv}_n - {\bv} \|_{H(\Omega)}^{1/2}\|{\bphi}\|_{V(\Omega)}.
		\end{align*}
		For a fixed ${\bphi}\in V(\Omega)$, $(\nabla(\cdot),\nabla{\bphi} )_\Omega$ is a bounded linear functional in $V(\Omega)$, hence $| (\nabla({\bv}_n - {\bv}),\nabla{\bphi} )_\Omega| \to 0$. On the other hand, since ${\bv}_n\to {\bv}$ in $H(\Omega)$, the second term on the inequality above converges to zero as well. This establishes the weak sequential continuity of $\mathbb{A}(\cdot,{\bphi})$, and consequentially the existence of the solution ${\bv}\in V(\Omega)$ to \eqref{weak:stationary}.}
	
	{
		Diagonal testing on \eqref{weak:stationary},  and H{\"o}lder and Poincare inequalities yield
		\begin{align*}
			\nu\|{\bv}\|_{V(\Omega)}^2 = ({\blf},{\bv})_\Omega \le \tilde c\|{\blf}\|_{L^2(\Omega;\mathbb{R}^2)}\|{\bv}\|_{V(\Omega)} \le \tilde c\|{\blf}\|_{L^2(\mathcal{D};\mathbb{R}^2)}\|{\bv}\|_{V(\Omega)}.
		\end{align*}
		Dividing both sides of the inequality above by $\|{\bv}\|_{V(\Omega)}$ gives us \eqref{estimate:stationary}.}
	
	{
		If ${\bv}_1,{\bv}_2\in V(\Omega)$ are solutions of \eqref{weak:stationary}, then the element $\tilde{\bv}={\bv}_1-{\bv}_2\in V(\Omega)$ solves the equation
		\begin{align*}
			\nu(\nabla\tilde{\bv},\nabla{\bphi})_\Omega + \gamma ( ({\bv}_2\cdot\nabla)\tilde{\bv}, {\bphi} )_\Omega = - \gamma( (\tilde{\bv}\cdot\nabla){\bv}_1,{\bphi} )_\Omega\quad \forall{\bphi}\in V(\Omega).
		\end{align*}
		Taking ${\bphi} = \tilde{\bv}$, using \eqref{trilinear:ladyzhenskaya}, \eqref{trilinear:commute}, Poincare inequality and \eqref{estimate:stationary}, give us
		\begin{align*}
			\nu\|\tilde{\bv}\|_{V(\Omega)}^2 \le \frac{2^{1/2}\gamma \tilde{c}^2}{\nu}\|{\blf}\|_{L^2(\mathcal{D};\mathbb{R}^2)}\|\tilde{\bv}\|_{V(\Omega)}^2.
		\end{align*}
		The inequality above can be written as $(\nu^2 - 2^{1/2}\gamma\tilde{c}^2\|{\blf}\|_{L^2(\mathcal{D};\mathbb{R}^2)})\|\tilde{\bv}\|_{V(\Omega)}^2\le 0$. The uniqueness assumption implies $\|\tilde{\bv}\|_{V(\Omega)}^2\le 0$ which concludes to the uniqueness of the solution of \eqref{weak:stationary}.
	}

\end{proof}

\begin{remark}
	(i) We note that the constant $\tilde c>0$ in \eqref{estimate:stationary} most of the time arises from Poincar{\'e} inequality, which results to it being dependent to the domain $\Omega$.  Fortunately, we can use Faber-Krahn inequality arguments to show that it indeed depends only on $\mathcal{D}$ but not on $\Omega$ (see for e.g.  \cite{bucur2005}). 
	
	(ii) A more natural assumption for the uniqueness of the stationary Navier--Stokes solution is $\mathcal{B}\gamma\tilde{c}\|{\blf}\|_{L^2(\mathcal{D};\mathbb{R}^2)} < \nu^2$, where $\mathcal{B} = {\displaystyle\sup_{{\bu},{\bv},{\bw}\in V(\Omega)}} \frac{(({\bu}\cdot\nabla){\bv},{\bw} )}{\|{\bu}\|_{V(\Omega)}\|{\bv}\|_{V(\Omega)}\|{\bw}\|_{V(\Omega)}}$. Meanwhile, from \cite[Lemma III.3.4]{temam1977} we infer that $\mathcal{B}\le 2^{1/2}\tilde c$, where $\tilde c>0$ is the Poincare constant as in \eqref{estimate:stationary}, which means that the uniqueness assumption in Theorem \ref{theorem:statewp}(ii) implies the usual assumption.
	
	{
		(iii) As can be observed, the energy estimate when $\gamma = 0$ (i.e., when reduced to the Stokes equations) is similar to the case when $\gamma >0$, it is mainly because the trilinear form when evaluated with all the same variables equates to zero. Furthermore, when $\gamma = 0$, the uniqueness assumption on the data ${\blf}\in L^2(\mathcal{D};\mathbb{R}^2)$ can be dropped. Nevertheless, it is noteworthy to point out that the nonlinear behavior caused by the convection term gives a more robust flow in the sense that the acceleration of the fluid is interpreted as not only affected by the change of the velocity through time but also by the \textit{transport} effect the fluid imposes on itself. }
	
	{
		(iv) We note that because the external force ${\blf}\in L^2(\mathcal{D};\mathbb{R}^2)$ is not dependent on time, the energy is bounded explicitly by the terminal time parameter linearly.  By a quick look, one might expect that as the terminal time grows the solution of the time-dependent problem may grow as well. One may also conjecture that because of this estimate, the energy of the gap between the solution of the time-dependent and stationary Navier--Stokes equations grows proportionally with time. However, we shall see later that such energy gap is bounded by the $L^2$-gap between the initial condition of the dynamic equation and the solution of the stationary state.
	}
	\label{remark:2}
\end{remark}

We end this section by mentioning that in the subsequent parts, the necessary assumptions for existence and uniqueness of state solutions will be assumed even when such conditions are not explicitly mentioned.

\section{Existence of shape solutions}
\label{section:4}
We show in this section that the problems \eqref{problem:timedependent} and \eqref{problem:stationary} possess solutions to exempt us from futile analyses.  We shall utilize characteristic functions of domains in $\mathbb{R}^2$, i.e., for a given nonempty domain $A\subset\mathbb{R}^2$ the characteristic function denoted as $\chi_A :\mathbb{R}^2\to [0,1]$ is defined by
\begin{align*}
	\chi_A(x) = \left\{ \begin{aligned} 1 &\qquad \text{if }x\in A,\\ 0 &\qquad \text{otherwise}. \end{aligned}\right.
\end{align*}
We shall take advantage of the $L^\infty$-topology, by which we shall define convergence of domains if their corresponding characteristic functions converge in the $L^\infty$ topology. To be precise, we say that a sequence of domains $\{\Omega_n\}\subset \mathcal{O}_{\omega}$ converges to $\Omega\in \mathcal{O}_{\omega}$ if $\chi_{\Omega_n} \to\chi_{\Omega}$ in the $L^\infty$-topology, in such case we denote the domain convergence as $\Omega_n\chiarrow\Omega$.

A requirement for a good topology on the set of admissible domains is for it to be closed under such topology.  As such, we shall utilize the so-called {\it cone property} introduced in \cite{chenais1975}.  For the definition of such property, we refer the reader to the said literature. Nevertheless, we mention important properties inferred from such condition.
\begin{lemma}
	Any element $\Omega\in\mathcal{O}_{\omega}$ satisfies the cone property. Furthermore, there exists $c>0$ such that for any $\Omega\in\mathcal{O}_{\omega}$ there exists $\mathcal{E}_\Omega^d\in\mathcal{L}(H^k(\Omega;\mathbb{R}^d);H^k(\mathcal{D};\mathbb{R}^d))$, for $d=1,2$ and $k = 0,1$, such that 
	\begin{align}
		\max_{d,k}\{\|\mathcal{E}_\Omega^d\|_{\mathcal{L}(H^k(\Omega;\mathbb{R}^d),H^k(\mathcal{D};\mathbb{R}^d))}\}\le c.
		\label{ineq:uniformbound}
	\end{align}
	Lastly, for any sequence $\{\Omega_n\}\subset\mathcal{O}_{\omega}$, there exists a subsequence $\{\Omega_{n_k}\}\subset\{\Omega_n\}$ and an element $\Omega\in\mathcal{O}_{\omega}$ such that $\Omega_{n_k}\chiarrow \Omega$. 
	\label{lemma:extensionundclosure}
\end{lemma}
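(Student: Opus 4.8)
The plan is to derive all three assertions from a single uniform cone property together with the compactness framework of Chenais \cite{chenais1975}. First I would verify the cone property for a fixed $\Omega\in\mathcal{O}_{\omega}$: since $\partial\Omega$ is of class $C^{1,1}$, in a neighbourhood of each boundary point the boundary is the graph of a function with uniformly bounded gradient, so a finite cone of fixed height and half-angle (determined by that Lipschitz bound) fits inside $\Omega$ with vertex at the boundary point, and compactness of $\partial\Omega$ lets one choose a single cone valid along the whole boundary. Because every admissible domain is squeezed between $\omega$ and the fixed container $\mathcal{D}$ and shares the same boundary regularity, one may select a common cone $C$; that is, the family $\mathcal{O}_{\omega}$ enjoys the \emph{uniform} cone property, which is what makes the remaining estimates independent of $\Omega$.

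For the uniform extension operators I would argue locally. The case $k=0$ is immediate, since extension by zero from $\Omega$ to $\mathcal{D}$ has operator norm one. For $k=1$, cover $\partial\Omega$ by finitely many balls in each of which, after a rigid motion, $\Omega$ is the epigraph of a Lipschitz function, and inside each chart extend an $H^{1}$ field across $\partial\Omega$ by a reflection adapted to the cone direction; such a local extension is bounded in $H^{1}$ by a constant depending only on the cone $C$. Patching the local extensions to the interior identity through a partition of unity subordinate to the cover produces $\mathcal{E}_\Omega^d$. The decisive point is that the number of charts, the Lipschitz constants, and the partition data are all controlled by the single cone $C$ and by $\mathcal{D}$, so the operator norms admit a bound $c$ that is uniform over $d\in\{1,2\}$, $k\in\{0,1\}$ and, crucially, over $\Omega\in\mathcal{O}_{\omega}$, which is exactly \eqref{ineq:uniformbound}.

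For the compactness statement I would work with sets of finite perimeter. Every $\chi_{\Omega_n}$ is $\{0,1\}$-valued and supported in $\mathcal{D}$, and the uniform cone property bounds the perimeters $\mathrm{Per}(\Omega_n)$ by a constant independent of $n$; thus $\{\chi_{\Omega_n}\}$ is bounded in $BV(\mathcal{D})$ and, by the compact embedding $BV(\mathcal{D})\hookrightarrow L^{1}(\mathcal{D})$, a subsequence converges strongly in $L^{1}(\mathcal{D})$ (equivalently in every $L^{p}$ with $p<\infty$, and in measure, i.e. $|\Omega_{n_k}\triangle\Omega|\to0$) to a limit that, being an $L^{1}$-limit of $\{0,1\}$-valued functions, is again a characteristic function $\chi_\Omega$. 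This is the sense in which $\Omega_{n_k}\chiarrow\Omega$, and the inclusions $\omega\subset\Omega\subset\mathcal{D}$ together with connectedness pass to the limit, leaving only the verification that $\Omega\in\mathcal{O}_{\omega}$.

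That verification is the main obstacle. What the limiting procedure preserves directly is the cone property itself: since all $\Omega_n$ share the cone $C$, so does $\Omega$, and a domain with the cone property is Lipschitz. Certifying that the limit retains the full $C^{1,1}$ boundary regularity demanded by the definition of $\mathcal{O}_{\omega}$ is the delicate step, and it is here that one must lean on the fact that the cone property — rather than any $L^\infty$ closeness of characteristic functions — is the condition stable under this convergence, in the spirit of \cite{chenais1975}. I would therefore isolate this regularity-preservation as the crux of the argument and treat the perimeter bound, the partition-of-unity gluing, and the passage to the limit in the set inclusions as routine.
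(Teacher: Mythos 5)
Your route is genuinely different from the paper's: the paper disposes of all three claims by citation (the cone property via \cite[Theorem 2.4.7]{henrot2018} from the Lipschitz boundary, the uniform extension bound via \cite{chenais1975}, and the compactness via \cite[Theorem 2.4.10]{henrot2018}), whereas you attempt to reconstruct the arguments from local charts, reflection extensions, and $BV$ compactness. In doing so you expose, but do not close, precisely the points that those references are invoked to carry.

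The central gap is your assertion that the whole family $\mathcal{O}_{\omega}$ admits a \emph{single} cone $C$. For one fixed $\Omega$, compactness of $\partial\Omega$ does yield a cone, but the class $\mathcal{O}_{\omega}$ only requires each member to be ``at least of class $C^{1,1}$'' with no uniform bound on the local graph norms; being squeezed between $\omega$ and $\mathcal{D}$ does nothing to prevent a sequence of admissible domains whose boundary Lipschitz constants blow up, so the cone aperture and height can degenerate across the family. Everything downstream rests on this unproved uniformity: the constant $c$ in \eqref{ineq:uniformbound} (your chart count and partition-of-unity data are ``controlled by the single cone $C$'' only if $C$ exists), and the uniform perimeter bound that feeds your $BV$ compactness. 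Chenais's theorem is stated for a class \emph{defined} by a uniform cone condition; you must either impose that uniformity in the definition of $\mathcal{O}_{\omega}$ or derive it, and you do neither. Two further problems: (i) your compactness conclusion is strong $L^{1}$ convergence (equivalently convergence in measure) of the characteristic functions, whereas the lemma as written demands convergence in the $L^\infty$-topology, under which a sequence of $\{0,1\}$-valued functions converges only if it is eventually constant almost everywhere --- so either the stronger claim needs a proof or the topology needs to be read as something weaker than what is literally stated; (ii) you correctly identify that the $L^{1}$ limit is only guaranteed to inherit the cone property (hence a Lipschitz boundary), not the $C^{1,1}$ regularity required for membership in $\mathcal{O}_{\omega}$, but isolating this as ``the crux'' is not the same as resolving it, so the closure assertion of the lemma remains unestablished in your write-up.
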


\begin{proof}
	Since each $\Omega\in\mathcal{O}_{\omega}$ has a bounded and Lipschitz boundary, from \cite[Theorem 2.4.7]{henrot2018}, $\Omega$ satisfies the cone property. As for the uniform boundedness property \eqref{ineq:uniformbound} and compactness of $\mathcal{O}_{\omega}$ with respect to the $L^\infty$-topology, we refer the reader to \cite{chenais1975} and \cite[Theorem 2.4.10]{henrot2018}, respectively.   
\end{proof}

The first step in establishing existence of shape solutions is by showing the continuity of the map $\Omega\mapsto{\bphi}$ where $\bphi$ is either the solution of \eqref{weak:timedependent} or \eqref{weak:stationary}. 

\begin{proposition}
	Let $\{\Omega_n\}\subset\mathcal{O}_\omega$ be a sequence converging to an element $\Omega^*\in\mathcal{O}_\omega$. 
	\begin{itemize}
		\item[i)] For a fixed $T>0$, let ${\bu}_n\in L^\infty(I;H(\Omega_n))\cap W^2(\Omega_n)$ be the solution of \eqref{weak:timedependent} in $\Omega=\Omega_n$, and $\overline{\bu}_n := \mathcal{E}_{\Omega_n}^2({\bu}_n)$ be its extension by virtue of Lemma \ref{lemma:extensionundclosure}. Then there exists $\overline{\bu}\in  L^\infty(I;L^2(\mathcal{D};\mathbb{R}^2))\cap L^2(I;H^1(\mathcal{D};\mathbb{R}^2))$ such that $\overline{\bu}_n \rightharpoonup \overline{\bu}$ in $L^2(I;H^1(\mathcal{D};\mathbb{R}^2))$ and $\overline{\bu}_n \ws \overline{\bu}$ in $L^\infty(I;L^2(\mathcal{D};\mathbb{R}^2))$. Furthermore, ${\bu}:=\overline{\bu}|_{\Omega^*}$ solves \eqref{weak:timedependent} in $\Omega=\Omega^*$.
		\item[ii)] Suppose that ${\bv}_n \in V(\Omega_n)$ is the solution of \eqref{weak:stationary} in $\Omega = \Omega_n$, and that $\overline{\bv}_n := \mathcal{E}_{\Omega_n}^2({\bv}_n)$ is its extension in $\mathcal{D}$, then there exists $\overline{\bv}\in H^1(\mathcal{D};\mathbb{R}^2)$ such that $\overline{\bv}_n\to \overline{\bv}$ in $H^1(\mathcal{D};\mathbb{R}^2)$ and ${\bv}:= \overline{\bv}|_{\Omega^*}$ solves \eqref{weak:stationary} in $\Omega=\Omega^*$.
	\end{itemize}
	
\end{proposition}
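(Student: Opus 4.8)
The plan is to treat both parts by a single compactness-and-passage-to-the-limit scheme: obtain uniform bounds on the extensions, extract weak limits (these are already the claimed convergences in part~i)), upgrade to the strong convergence that the nonlinearity forces, and then identify the restriction to $\Omega^*$ as a solution by rewriting every integral over $\Omega_n$ as $\int_{\mathcal D}\chi_{\Omega_n}(\cdot)$ and exploiting $\Omega_n\chiarrow\Omega^*$. Throughout, uniqueness from Theorem~\ref{theorem:statewp} will promote subsequential convergence to convergence of the full sequence, since any subsequence admits a further subsequence converging to the unique solution on $\Omega^*$.

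For part~i) I would first combine the energy estimates \eqref{estimate:Linf}--\eqref{estimate:L2}, which are uniform in $\Omega$, with the uniform extension bound \eqref{ineq:uniformbound} of Lemma~\ref{lemma:extensionundclosure} to deduce that $\{\overline\bu_n\}$ is bounded in $L^\infty(I;L^2(\mathcal D;\mathbb R^2))\cap L^2(I;H^1(\mathcal D;\mathbb R^2))$; Banach--Alaoglu and reflexivity then yield a subsequence with $\overline\bu_n\rightharpoonup\overline\bu$ in $L^2(I;H^1(\mathcal D;\mathbb R^2))$ and $\overline\bu_n\ws\overline\bu$ in $L^\infty(I;L^2(\mathcal D;\mathbb R^2))$, as claimed. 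The decisive step is the strong convergence needed for the convection term $\gamma((\bu_n\cdot\nabla)\bu_n,\bphi)_{\Omega_n}$. Rather than differentiate the extensions in time, since $\mathcal E^2_{\Omega_n}$ does not interact well with $\partial_t\bu_n\in L^2(I;V^*(\Omega_n))$, I would establish a uniform time-translation estimate directly: testing \eqref{weak:timedependent} on $\Omega_n$ with $\bu_n(t+h)-\bu_n(t)$ and integrating yields, via the energy bounds and \eqref{trilinear:ladyzhenskaya}, a bound $\int_0^{T-h}\|\bu_n(t+h)-\bu_n(t)\|_{H(\Omega_n)}^2\,dt\le Ch$ with $C$ independent of $n$. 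As $\mathcal E^2_{\Omega_n}$ is linear, $\overline\bu_n(t+h)-\overline\bu_n(t)=\mathcal E^2_{\Omega_n}(\bu_n(t+h)-\bu_n(t))$, so \eqref{ineq:uniformbound} transports this estimate to $\mathcal D$; together with the $L^2(I;H^1(\mathcal D))$ bound and the compact embedding $H^1(\mathcal D)\hookrightarrow\hookrightarrow L^2(\mathcal D)$, the Aubin--Lions--Simon criterion gives $\overline\bu_n\to\overline\bu$ strongly in $L^2(I;L^2(\mathcal D;\mathbb R^2))$.

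With strong convergence secured, I would pass to the limit in the weak form: $\chi_{\Omega_n}\to\chi_{\Omega^*}$ together with the weak $H^1$- and strong $L^2$-convergence of $\overline\bu_n$ controls the viscous, convective (after using \eqref{trilinear:commute}) and source terms for a fixed $\bphi\in V(\Omega^*)$, whose support is admissible for large $n$ up to a density argument. One must then verify that $\bu:=\overline\bu|_{\Omega^*}$ lies in the correct class: the divergence-free constraint in $\Omega^*$ follows by passing to the limit in $\int_{\Omega_n}\bu_n\cdot\nabla\psi=0$, while the homogeneous condition $\bu(t)\in H^1_0(\Omega^*;\mathbb R^2)$ on the \emph{new} boundary $\partial\Omega^*$ is the genuinely delicate point, for which the uniform cone property of $\mathcal O_\omega$ is essential to control traces under domain convergence. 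I expect this trace/boundary step, rather than the compactness, to be the main obstacle.

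For part~ii) the scheme is the same but simpler, there being no time variable: \eqref{estimate:stationary} and \eqref{ineq:uniformbound} bound $\{\overline\bv_n\}$ in $H^1(\mathcal D;\mathbb R^2)$, a subsequence converges weakly there and, by Rellich, strongly in $L^2(\mathcal D)$, which suffices to pass to the limit in \eqref{weak:stationary} and to identify $\bv:=\overline\bv|_{\Omega^*}\in V(\Omega^*)$ as a solution. The upgrade to \emph{strong} $H^1(\mathcal D)$-convergence requires the extra energy argument: testing \eqref{weak:stationary} with $\bv_n$ annihilates the convection term and gives $\nu\|\bv_n\|_{V(\Omega_n)}^2=\int_{\mathcal D}\chi_{\Omega_n}\blf\cdot\overline\bv_n\to\int_{\Omega^*}\blf\cdot\bv=\nu\|\bv\|_{V(\Omega^*)}^2$, so that the Dirichlet energies converge; combined with weak convergence and control of the extensions on $\mathcal D\setminus\Omega_n$, whose measure vanishes, this produces norm convergence and hence strong convergence in $H^1(\mathcal D;\mathbb R^2)$. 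Uniqueness again upgrades the subsequence to the whole sequence.
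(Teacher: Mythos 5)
Your overall architecture coincides with the paper's: uniform bounds transported to $\mathcal D$ via \eqref{ineq:uniformbound}, extraction of the weak and weak-$*$ limits, an upgrade to strong $L^2(I;L^2(\mathcal D;\mathbb R^2))$ convergence to handle the convection term, and passage to the limit after rewriting all integrals as $\int_{\mathcal D}\chi_{\Omega_n}(\cdot)$. Where you genuinely diverge is the compactness mechanism in part~i): the paper simply asserts that $\|\partial_t\overline{\bu}_n\|_{L^2(I;H^{-1}(\mathcal D))}\le c$ uniformly and invokes the compact embedding of the resulting $W$-type space, whereas you obtain equi-integrability in time through a uniform translation estimate $\int_0^{T-h}\|{\bu}_n(t+h)-{\bu}_n(t)\|_{H(\Omega_n)}^2\,\du t\le Ch$ and Simon's criterion. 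Your route is arguably the more defensible one: the extension operators $\mathcal E^2_{\Omega_n}$ of Lemma~\ref{lemma:extensionundclosure} act on $H^k(\Omega_n;\mathbb R^d)$ for $k=0,1$ only, so the paper's bound on ``$\partial_t\overline{\bu}_n$'' requires an interpretation that is never supplied, while your use of the linearity identity $\overline{\bu}_n(t+h)-\overline{\bu}_n(t)=\mathcal E^2_{\Omega_n}({\bu}_n(t+h)-{\bu}_n(t))$ only ever applies the extension to elements of $H(\Omega_n)$. The price is the translation estimate itself, whose derivation (testing against ${\bu}_n(t+h)-{\bu}_n(t)$ and controlling the trilinear term via \eqref{trilinear:ladyzhenskaya}) you should carry out explicitly, but it is standard. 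For part~ii) the paper offers no proof at all, deferring to the reference \cite{kasumba2012}; your energy argument $\nu\|{\bv}_n\|_{V(\Omega_n)}^2=({\blf},{\bv}_n)_{\Omega_n}\to({\blf},{\bv})_{\Omega^*}=\nu\|{\bv}\|_{V(\Omega^*)}^2$, combined with weak convergence, is the right way to get norm convergence; just note that transferring this to strong $H^1(\mathcal D;\mathbb R^2)$ convergence of the \emph{extensions} needs one more line, since the operators $\mathcal E^2_{\Omega_n}$ vary with $n$ and what vanishes is the measure of $\Omega_n\triangle\Omega^*$, not of $\mathcal D\setminus\Omega_n$. Finally, both you and the paper leave the membership ${\bu}(t)\in H^1_0(\Omega^*;\mathbb R^2)$ essentially unargued; you at least flag it as the delicate point, and it is where the uniform cone property must actually be used.
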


\begin{proof}
	{\it i)} From the uniform extension property \eqref{ineq:uniformbound}, we get for each $n\in\mathbb{N}$
	\begin{align*}
		&\begin{aligned}
			\|\overline{\bu}_n\|_{L^\infty(I;L^2(\mathcal{D};\mathbb{R}^2))} \le c_1\left(\sqrt{\frac{T}{\nu}}\|{\blf}\|_{L^2(\mathcal{D};\mathbb{R}^2)} + \|{\bu}_0\|_{L^2(\mathcal{D};\mathbb{R}^2)} \right), 
		\end{aligned}\\
		&\begin{aligned}
			\|\overline{\bu}_n\|_{L^2(I;H^1(\mathcal{D};\mathbb{R}^2))}^2 \le c_2\left(\frac{T}{\nu^2}\|{\blf}\|_{L^2(\mathcal{D};\mathbb{R}^2)}^2 + \frac{1}{\nu}\|{\bu}_0\|_{L^2(\mathcal{D};\mathbb{R}^2)}^2 \right).
		\end{aligned}
	\end{align*}
	This implies that there exists $\overline{\bu}\in  L^\infty(I;L^2(\mathcal{D};\mathbb{R}^2))\cap L^2(I;H^1(\mathcal{D};\mathbb{R}^2))$ upon which $\overline{\bu}_n \rightharpoonup \overline{\bu}$ in $L^2(I;H^1(\mathcal{D};\mathbb{R}^2))$ and $\overline{\bu}_n \ws \overline{\bu}$ in $L^\infty(I;L^2(\mathcal{D};\mathbb{R}^2))$.  Furthermore, one can easily show that $\|\partial_t\overline{\bu}_n\|_{L^2(I;H^{-1}(\mathcal{D}))} \le c$ for some constant $c>0$ independent of $\Omega$ and $T>0$. Thus, we obtain the following convergences
	\begin{align}
		\left\{
		\begin{aligned}
			\overline{\bu}_n &\rightharpoonup \overline{\bu} &&\text{in }L^2(I;H^1(\mathcal{D};\mathbb{R}^2)),\\
			\overline{\bu}_n &\ws \overline{\bu} &&\text{in }L^\infty(I;L^2(\mathcal{D};\mathbb{R}^2)),\\
			\overline{\bu}_n &\to \overline{\bu} &&\text{in }L^2(I;L^2(\mathcal{D};\mathbb{R}^2)).\\
		\end{aligned}
		\right.
		\label{conv:timedependent}
	\end{align} 
	
	Since ${\bu}_n\in L^\infty(I;H(\Omega_n))\cap L^2(I;V(\Omega_n))$ solves \eqref{weak:timedependent}, then the extension satisfies 
	\begin{align*}
		(\chi_{n}\partial_t\overline{\bu}_n(t), {\bphi} )_{\mathcal{D}} + \nu(\chi_{n}\nabla\overline{\bu}_n(t),\nabla{\bphi})_{\mathcal{D}} + \gamma(\chi_{n}(\overline{\bu}(t)\cdot\nabla)\overline{\bu}(t),{\bphi})_{\mathcal{D}} &= (\chi_{n}{\blf},{\bphi})_{\mathcal{D}}, 
	\end{align*}
	for all ${\bphi}\in H^1(\mathcal{D};\mathbb{R}^2)$, where $\chi_n := \chi_{\Omega_n}$. Using the convergences in \eqref{conv:timedependent}, and the fact that $\chi_n\to\chi$ (where $\chi:=\chi_{\Omega^*}$) in $L^\infty(\mathcal{D};\mathbb{R})$ yield
	\begin{align*}
		(\chi\partial_t\overline{\bu}(t), {\bphi} )_{\mathcal{D}} + \nu(\chi\nabla\overline{\bu}(t),\nabla{\bphi})_{\mathcal{D}} + \gamma(\chi(\overline{\bu}(t)\cdot\nabla)\overline{\bu}(t),{\bphi})_{\mathcal{D}} & = (\chi{\blf},{\bphi})_{\mathcal{D}},
	\end{align*}
	for any ${\bphi}\in H^1(\mathcal{D};\mathbb{R}^2)$. By letting ${\bpsi}\in V(\Omega)$ and taking ${\bphi} = {\bpsi}$ in $\Omega^*$ and ${\bphi}=0$ in $\mathcal{D}\backslash\Omega^*$, we infer that ${\bu} = \overline{\bu}|_{\Omega^*} \in L^\infty(I;H(\Omega^*))\cap L^2(I;V(\Omega^*))$ satisfies \eqref{weak:timedependent} in $\Omega = \Omega^*$.
	
	{\it ii)} For the stationary problem, we refer the reader to \cite{kasumba2012}, among others. 
\end{proof}

\begin{remark}
	As a consequence of the strong convergence of the extensions in $H^1(\mathcal{D};\mathbb{R}^2)$, for any sequence ${\Omega_n}\subset\mathcal{O}_{\omega}$ such that $\Omega\chiarrow\Omega^*$ for some $\Omega^*\in\mathcal{O}_{\omega}$, we get the following convergence
	\begin{align}
		{\bv}_n \to \bv \text{ in }H^1(\omega;\mathbb{R}^2)
		\label{conv:strongstationary}
	\end{align}
	where ${\bv}_n\in V(\Omega_n)$ solves \eqref{weak:stationary} in $\Omega_n$, and ${\bv}\in V(\Omega^*)$ solves \eqref{weak:stationary} in $\Omega^*$.
\end{remark}

\begin{theorem}
	Suppose that the assumptions for Theorem \ref{theorem:statewp} hold, then both problems \eqref{problem:timedependent} and \eqref{problem:stationary} admit solutions.
\end{theorem}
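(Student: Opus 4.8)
The plan is to treat both problems by the direct method of the calculus of variations, combining the compactness of $\mathcal{O}_{\omega}$ furnished by Lemma \ref{lemma:extensionundclosure} with the stability of the state equations under $\chi$-convergence established in the preceding Proposition. First I would note that both functionals are bounded below: being sums of squared norms, $J_T$ and $J_s$ are nonnegative, while Theorem \ref{theorem:statewp} guarantees that for any fixed admissible domain the corresponding state, and hence the functional value, is finite; since $\mathcal{O}_{\omega}$ is nonempty, the infima $J_T^*$ and $J_s^*$ lie in $[0,\infty)$. I then select minimizing sequences $\{\Omega_n\}\subset\mathcal{O}_{\omega}$ with $J_T(\Omega_n)\to J_T^*$ (respectively $J_s(\Omega_n)\to J_s^*$), and by the compactness assertion of Lemma \ref{lemma:extensionundclosure} I extract a subsequence, not relabelled, together with an element $\Omega^*\in\mathcal{O}_{\omega}$ such that $\Omega_n\chiarrow\Omega^*$. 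Admissibility of the candidate minimizer is thus preserved by the closedness of $\mathcal{O}_{\omega}$.

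Next I would pass to the limit in the states. For the stationary problem, the remark following the Proposition gives the strong convergence $\bv_n\to\bv$ in $H^1(\omega;\mathbb{R}^2)$ recorded in \eqref{conv:strongstationary}, where $\bv$ solves \eqref{weak:stationary} in $\Omega^*$. Since $J_s(\Omega)=\nu\|\bv-\bu_D\|_{H^1(\omega;\mathbb{R}^2)}^2$ is a continuous function of $\bv$ in the $H^1(\omega;\mathbb{R}^2)$-topology, this strong convergence yields $J_s(\Omega^*)=\lim_n J_s(\Omega_n)=J_s^*$, so $\Omega^*$ solves \eqref{problem:stationary}. For the time-dependent problem I would use that $\omega\subset\Omega_n$ for every $n$, so the extension $\overline{\bu}_n$ agrees with $\bu_n$ on $\omega$; since the restriction $H^1(\mathcal{D};\mathbb{R}^2)\to H^1(\omega;\mathbb{R}^2)$ is bounded and hence weakly continuous, the convergence $\overline{\bu}_n\rightharpoonup\overline{\bu}$ in $L^2(I;H^1(\mathcal{D};\mathbb{R}^2))$ from \eqref{conv:timedependent} restricts to $\bu_n\rightharpoonup\bu$ in $L^2(I;H^1(\omega;\mathbb{R}^2))$, with $\bu=\overline{\bu}|_{\Omega^*}$ solving \eqref{weak:timedependent} in $\Omega^*$.

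The crux, and the step I expect to require the most care, is the lower semicontinuity of the time-averaged functional. Writing
\begin{align*}
	\frac{T}{\nu}\,J_T(\Omega) = \|\bu-\bu_D\|_{L^2(I;H^1(\omega;\mathbb{R}^2))}^2,
\end{align*}
I would exploit that this is the square of a Hilbert-space norm evaluated at $\bu-\bu_D$. From $\bu_n\rightharpoonup\bu$, hence $\bu_n-\bu_D\rightharpoonup\bu-\bu_D$, the weak lower semicontinuity of the norm gives $\|\bu-\bu_D\|_{L^2(I;H^1(\omega;\mathbb{R}^2))}\le\liminf_n\|\bu_n-\bu_D\|_{L^2(I;H^1(\omega;\mathbb{R}^2))}$, and after squaring and multiplying by the positive constant $\nu/T$ this becomes $J_T(\Omega^*)\le\liminf_n J_T(\Omega_n)=J_T^*$. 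Since $\Omega^*\in\mathcal{O}_{\omega}$ is admissible, equality must hold, so $\Omega^*$ solves \eqref{problem:timedependent}.

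The delicate ingredient is precisely that no compactness beyond what the Proposition already supplies is needed: the $L^2(\omega;\mathbb{R}^2)$-part of the integrand is in fact controlled by the strong $L^2(I;L^2(\mathcal{D};\mathbb{R}^2))$ convergence in \eqref{conv:timedependent}, while the gradient part passes to the limit only through weak lower semicontinuity. One subtlety to verify carefully is that the restriction to the fixed set $\omega$ genuinely inherits the weak limit of the extensions—this is where the inclusion $\omega\subset\Omega_n\cap\Omega^*$ is essential, as it makes the restricted states $\bu_n|_\omega$ and $\bu|_\omega$ coincide with the traces of $\overline{\bu}_n$ and $\overline{\bu}$ on $\omega$—so that the convexity and weak lower semicontinuity of the tracking term may be applied on a domain that does not move with $n$.
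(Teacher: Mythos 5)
Your proposal is correct and follows essentially the same route as the paper: the direct method, using the compactness of $\mathcal{O}_{\omega}$ under $\chi$-convergence from Lemma \ref{lemma:extensionundclosure} together with the state-convergence Proposition to obtain weak lower semicontinuity (the paper phrases this as $J_T$ being ``convex and continuous with respect to the state variable''). Your version merely spells out the details the paper leaves implicit --- the restriction of the weakly convergent extensions to the fixed observation set $\omega$ and the weak lower semicontinuity of the squared norm --- and these details are all sound.
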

\begin{proof}
	Note that for any $\Omega\in\mathcal{O}_\omega$, $J_T(\Omega)\ge 0$. Hence, we obtain a minimizing sequence $\{\Omega_{T,n}\}\subset\mathcal{O}_\omega$, i.e.,
	\[ 
	\lim_{n\to\infty}J_T( \Omega_{T,n} ) = \inf_{\Omega\in\mathcal{O}_\omega} J_T(\Omega) =: J_T^* .
	\]
	Since $\mathcal{O}_\omega$ is closed under the topology induced by the $L^\infty$-topology of characteristic functions, there exists a  subsequence, which we denote similarly as $\{\Omega_{T,n}\}$, such that $\Omega_{T,n}\chiarrow\Omega_T$.  Since $J_T$ is convex and continuous with respect to the state variable $\bu$, then 
	\begin{align*}
		J_T( \Omega_T) \le \liminf_{n\to\infty} J_T( \Omega_{T,n}) = J_T^*.
	\end{align*}
	This implies that $\Omega_T\in\mathcal{O}_\omega$ is a minimizer of $J_T$.  Similar arguments can be done to show that there exists $\Omega_s\in\mathcal{O}_\omega$ that minimizes $J_s$. 
\end{proof}


\section{Proof of estimate (5)} 
\label{section:5}
In this section, given the recently established existence of shape solutions, we now prove \eqref{ineq:goal}.

\begin{theorem}
	Suppose that ${\blf}\in L^2(\mathcal{D};\mathbb{R}^2)$, ${\bu}_0\in H(\Omega)\cap L^2(\mathcal{D};\mathbb{R}^2)$, and ${\bu}_D \in H^1(\omega;\mathbb{R}^2)$. If $J_T^* := J_T(\Omega_T)$ and $J_s^* := J_s(\Omega_s)$, where $\Omega_T$ and $\Omega_s$ are the solutions of \eqref{problem:timedependent} and \eqref{problem:stationary}, respectively; then there exists $c>0$, independent of $T$, such that \eqref{ineq:goal} holds.
	\label{theorem:goal}
\end{theorem}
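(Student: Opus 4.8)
The plan is to obtain \eqref{ineq:goal} from a two-sided comparison in which the optimal domain of each problem is fed as a competitor into the other. Writing $\|\cdot\|_{H^1(\omega)}^2 := \|\cdot\|_{L^2(\omega;\mathbb{R}^2)}^2 + \|\nabla\cdot\|_{L^2(\omega;\mathbb{R}^{2\times2})}^2$, the costs read $J_T(\Omega) = \frac{\nu}{T}\int_0^T\|\bu(t)-{\bu}_D\|^2_{H^1(\omega)}\,\du t$ and $J_s(\Omega) = \nu\|\bv-{\bu}_D\|^2_{H^1(\omega)}$. The central object is the difference $\bw := \bu - \bv$ of the time-dependent and stationary velocities \emph{on one and the same} domain $\Omega\in\mathcal{O}_{\omega}$, and the whole argument rests on showing that $\bw$ decays exponentially and is square-integrable in time uniformly in $T$.

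First I would derive the dissipation estimate for $\bw$. Subtracting \eqref{weak:stationary} from \eqref{weak:timedependent} on a fixed $\Omega$ and expanding the convection difference as $(\bu\cdot\nabla)\bu - (\bv\cdot\nabla)\bv = (\bv\cdot\nabla)\bw + (\bw\cdot\nabla)\bv + (\bw\cdot\nabla)\bw$, the test function ${\bphi}=\bw(t)$ together with the antisymmetry \eqref{trilinear:commute} (which annihilates $((\bv\cdot\nabla)\bw,\bw)_\Omega$ and $((\bw\cdot\nabla)\bw,\bw)_\Omega$) leaves only $-\gamma((\bw\cdot\nabla)\bv,\bw)_\Omega$ on the right. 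Bounding this via \eqref{trilinear:ladyzhenskaya}, absorbing a factor $\tfrac{\nu}{2}\|\bw\|^2_{V(\Omega)}$ by Young's inequality, and invoking the spectral bound $\|\bw\|_{H(\Omega)}^2 \le \lambda_1^{-1}\|\bw\|_{V(\Omega)}^2$ yields
\begin{align*}
\frac{d}{dt}\|\bw(t)\|_{H(\Omega)}^2 + \Big(\nu\lambda_1 - \tfrac{2\gamma^2}{\nu}\|\bv\|_{V(\Omega)}^2\Big)\|\bw(t)\|_{H(\Omega)}^2 \le 0.
\end{align*}
Under the data smallness already assumed for uniqueness — combined with \eqref{estimate:stationary} and the domain monotonicity $\lambda_1(\Omega)\ge\lambda_1(\mathcal{D})>0$ valid for every $\Omega\subset\mathcal{D}$ — the bracket is a positive constant $\kappa$ independent of $\Omega$ and $T$, so Gronwall gives $\|\bw(t)\|_{H(\Omega)}^2 \le \|\bw(0)\|_{H(\Omega)}^2 e^{-\kappa t}$ with $\|\bw(0)\|_{H(\Omega)} = \|{\bu}_0 - \bv\|_{L^2(\Omega;\mathbb{R}^2)}$ controlled by the data. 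Reinserting this decay into the energy inequality \emph{before} applying Poincaré bounds $\int_0^T\|\bw(t)\|_{V(\Omega)}^2\,\du t \le C\|\bw(0)\|_{H(\Omega)}^2$ uniformly in $T$; since $\omega\subset\Omega$ and, by Poincaré, $\|\cdot\|_{H^1(\omega)}\le\|\cdot\|_{H^1(\Omega)}\le C\|\cdot\|_{V(\Omega)}$, the same uniform bound holds for $\int_0^T\|\bw(t)\|^2_{H^1(\omega)}\,\du t$.

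With this in hand I expand the time-averaged cost on a fixed domain. Using $\bu(t)-{\bu}_D = (\bv-{\bu}_D)+\bw(t)$,
\begin{align*}
\frac1T\int_0^T\|\bu(t)-{\bu}_D\|^2_{H^1(\omega)}\,\du t = \|\bv-{\bu}_D\|^2_{H^1(\omega)} + \frac1T\int_0^T\|\bw(t)\|^2_{H^1(\omega)}\,\du t + \frac2T\int_0^T\big(\bw(t),\bv-{\bu}_D\big)_{H^1(\omega)}\,\du t.
\end{align*}
The second term is $O(1/T)$ by the uniform integral bound, while Cauchy--Schwarz together with $\int_0^T\|\bw\|_{H^1(\omega)}\,\du t \le \sqrt{T}\,(\int_0^T\|\bw\|^2_{H^1(\omega)}\,\du t)^{1/2} = O(\sqrt{T})$ makes the cross term $O(1/\sqrt{T})$. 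Multiplying by $\nu$, this reads $J_T(\Omega)=J_s(\Omega)+R(\Omega,T)$ with $|R(\Omega,T)|\le c\,(1/T+1/\sqrt{T})$, $c$ uniform over $\mathcal{O}_{\omega}$. Applying this on $\Omega=\Omega_s$ and using minimality $J_T^*=J_T(\Omega_T)\le J_T(\Omega_s)$ gives $J_T^*\le J_s^*+c(1/T+1/\sqrt{T})$; applying it on $\Omega=\Omega_T$, discarding the nonnegative $\frac1T\int\|\bw\|^2$ term and retaining only the cross term, gives $J_s^*\le J_s(\Omega_T)\le J_T(\Omega_T)+c/\sqrt{T}=J_T^*+c/\sqrt{T}$. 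Together these yield \eqref{ineq:goal}.

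The main obstacle is \emph{uniformity}: every constant — the decay rate $\kappa$, the first eigenvalue $\lambda_1$ and Poincaré constant, and $\tilde c$ in \eqref{estimate:stationary} — must be independent of the competing domain, since $\Omega_T$ and $\Omega_s$ change with $T$. This is precisely where the cone property, the uniform extension bound \eqref{ineq:uniformbound} of Lemma \ref{lemma:extensionundclosure}, the monotonicity $\lambda_1(\Omega)\ge\lambda_1(\mathcal{D})$, and the Faber--Krahn argument of Remark~(i) following Theorem \ref{theorem:statewp} are indispensable; otherwise $c$ in \eqref{ineq:goal} could depend on $T$ through the moving optimal domains. A secondary delicate point is the data-smallness threshold ensuring $\kappa>0$, which must be shown compatible with (indeed, implied by a mild strengthening of) the uniqueness hypothesis of Theorem \ref{theorem:statewp}(ii).
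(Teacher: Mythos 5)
Your proposal is correct and follows essentially the same route as the paper: both arguments hinge on the energy estimate for the gap $\delta{\bu}={\bu}-{\bv}$ between the dynamic and stationary states on a \emph{fixed} competitor domain, which under the smallness/uniqueness hypothesis yields $\int_0^T\|\delta{\bu}(t)\|_{V(\Omega)}^2\,\du t\le C\|{\bu}_0-{\bv}\|_{H(\Omega)}^2$ uniformly in $T$, followed by a Cauchy--Schwarz in time and the two-sided minimality comparison to produce the $1/T+1/\sqrt{T}$ rate. The only cosmetic differences are that you expand $\|({\bv}-{\bu}_D)+\delta{\bu}\|^2$ rather than factoring the difference of squares, and you insert an exponential-decay step (with its slightly different smallness threshold $\nu\lambda_1>2\gamma^2\nu^{-1}\|{\bv}\|_{V(\Omega)}^2$, which you rightly flag) that the paper avoids by absorbing the trilinear term directly into the dissipation, exactly as in its inequality \eqref{estimate:delta}.
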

\begin{proof}
	Firstly, note that since $J_T^*$ is the minimizer of $J_T$, $J_T^* \le J_T(\Omega_s)$. Hence, $J_T^* - J_s^* \le J_T(\Omega_s) - J_s(\Omega_s)$.  
	
	We consider the solution ${\bu}_s \in  L^\infty(I;H(\Omega_s))\cap L^2(I;V(\Omega_s))$ of \eqref{weak:timedependent} on $\Omega = \Omega_s$.  According to Theorem \ref{theorem:statewp}({\it i}), this solution satisfies \eqref{estimate:Linf} and \eqref{estimate:L2} in $\Omega = \Omega_s$. 
	
	We also consider the element $\delta{\bu}_s(t) : = {\bu}_s(t) - {\bv}_s$ for $t\in [0,T]$, where ${\bv}_s\in V(\Omega_s)$ is the optimal state for the problem \eqref{problem:stationary} that solves \eqref{weak:stationary} in $\Omega = \Omega_s$. This element solves the following variational equation
	\begin{align}
		\begin{aligned}
			&{}_{V^*(\Omega_s)}\langle \partial_t\delta{\bu}_s, {\bphi}\rangle_{V(\Omega_s)} + \nu(\nabla\delta{\bu}_s,\nabla{\bphi})_{\Omega_s} + \gamma(({\bv}_s\cdot\nabla)\delta{\bu}_s,{\bphi})_{\Omega_s}  = \gamma ((\delta{\bu}_s\cdot\nabla){\bphi},{\bv}_s)_{\Omega_s},
		\end{aligned}
		\label{weak:sdelta}
	\end{align}
	for all ${\bphi}\in V(\Omega_s)$, and almost every $t\in [0,T]$. Furthermore, $\delta{\bu}_s$ satisfies the initial condition $\delta{\bu}_s(0) : = {\bu}_0 - {\bv}_s$. By letting ${\bphi} = \delta{\bu}_s$ in \eqref{weak:sdelta}, we get
	\begin{align}
		\begin{aligned}
			\frac{1}{2}\partial_t\|\delta{\bu}_s(t)\|_{H(\Omega_s)}^2 &+ \nu \|\delta{\bu}_s(t)\|_{V(\Omega_s)}^2 = \gamma((\delta{\bu}_s(t)\cdot\nabla)\delta{\bu}_s(t),{\bv}_s)_{\Omega}.
		\end{aligned}
		\label{eqn:diagdelta}
	\end{align}
	{Taking the integral of both sides of \eqref{eqn:diagdelta} over $(0,t)\subset [0,T]$ and employing \eqref{trilinear:ladyzhenskaya}, Poincare inequality, and \eqref{estimate:stationary} yield
		\begin{align*}
			\frac{1}{2}\|\delta{\bu}_s(t)\|_{H(\Omega_s)}^2 &+ \nu\int_0^t\|\delta{\bu}_s(t)\|_{V(\Omega_s)}^2\du t
			= \|{\bu}_0 - {\bv}_s\|_{H(\Omega_s)}^2 + \gamma\int_0^t ((\delta{\bu}_s(t)\cdot\nabla)\delta{\bu}_s(t),{\bv}_s)_{\Omega}\du t\\
			&\le \|{\bu}_0 - {\bv}_s\|_{H(\Omega_s)}^2 + 2^{1/2}\gamma\tilde{c}\int_0^t \|\delta{\bu}_s(t)\|_{V(\Omega_s)}^2 \|{\bv}_s\|_{V(\Omega_s)} \du t\\
			&	\le \|{\bu}_0 - {\bv}_s\|_{H(\Omega_s)}^2 + \frac{2^{1/2}\gamma\tilde{c}^2}{\nu}\|{\blf} \|_{L^2(\mathcal{D};\mathbb{R}^2)}\int_0^t \|\delta{\bu}_s(t)\|_{V(\Omega_s)}^2  \du t
		\end{align*}
		By moving the second term on the right-hand side of the inequality above to the left-hand side, we infer that
		\begin{align}
			\frac{1}{2}\|\delta{\bu}_s(t)\|_{H(\Omega_s)}^2 + \frac{(\nu^2 - 2^{1/2}\gamma\tilde{c}^2\|{\blf}\|_{L^2(\mathcal{D};\mathbb{R}^2) } )}{\nu}\int_0^t\|\delta{\bu}_s(t)\|_{V(\Omega_s)}^2\du t \le \|{\bu}_0 - {\bv}_s\|_{H(\Omega_s)}^2.
			\label{estimate:delta}
		\end{align}
		Notice from \eqref{estimate:delta} that the energy of $\delta{\bu}_s$ is bounded if $ 2^{1/2}\gamma\tilde{c}^2\|{\blf}\|_{L^2(\mathcal{D};\mathbb{R}^2) } < \nu^2$, which is implicitly assumed to assure uniqueness of the solution to \eqref{weak:stationary}.  Furthermore, we also notice that \eqref{estimate:delta} validates item (iv) in Remark \ref{remark:2} regarding the gap between the solution of the time-dependent and stationary Navier--Stokes equations.}
	
	{
		Let us now derive an estimate for $J_T^* - J_s^*$. 
		We use Poincar{\'e} inequality, reverse triangle inequality, and the estimates \eqref{estimate:Linf}, \eqref{estimate:L2}, \eqref{estimate:stationary}, and \eqref{estimate:delta} to get
		\begin{align*}
			J_T^* -  J_s^* \le& \,  J_T(\Omega_s) - J_s(\Omega_s)
			\le  \frac{\nu}{T}\int_0^T  \|\nabla({\bu}_s(t) - {\bu}_D)\|_{L^2(\omega;\mathbb{R}^{2\times 2})}^2 - \|\nabla({\bv}_s - {\bu}_D)\|_{L^2(\omega;\mathbb{R}^{2\times2})}^2\du t\\
			& + \frac{\nu}{T}\int_0^T  \|{\bu}_s(t) - {\bu}_D\|_{L^2(\omega;\mathbb{R}^{2\times 2})}^2 - \|{\bv}_s - {\bu}_D\|_{L^2(\omega;\mathbb{R}^{2\times2})}^2\du t\\
			\le &\,  \frac{\nu}{T}\int_0^T  \big(\|\nabla({\bu}_s(t) - {\bu}_D)\|_{L^2(\omega;\mathbb{R}^{2\times 2})} - \|\nabla({\bv}_s - {\bu}_D)\|_{L^2(\omega;\mathbb{R}^{2\times2})}\big)\\
			& \times  \big(\|\nabla({\bu}_s(t) - {\bu}_D)\|_{L^2(\omega;\mathbb{R}^{2\times 2})} + \|\nabla({\bv}_s - {\bu}_D)\|_{L^2(\omega;\mathbb{R}^{2\times2})}\big)\du t\\
			& + \frac{\nu}{T}\int_0^T  \big(\|{\bu}_s(t) - {\bu}_D\|_{L^2(\omega;\mathbb{R}^{2\times 2})} - \|{\bv}_s - {\bu}_D\|_{L^2(\omega;\mathbb{R}^{2\times2})}\big)\\
			& \times  \big(\|{\bu}_s(t) - {\bu}_D\|_{L^2(\omega;\mathbb{R}^{2\times 2})} + \|{\bv}_s - {\bu}_D\|_{L^2(\omega;\mathbb{R}^{2\times2})}\big)\du t\\
			\le &\,   \frac{c\nu}{T}\|\delta{\bu}_s \|_{L^2(I;V(\Omega_s))}\Big(\int_0^T\|\nabla{\bu}_s(t)\|_{L^2(\omega;\mathbb{R}^2)}^2 \du t + T\|\nabla{\bv}_s \|_{L^2(\omega;\mathbb{R}^2)}^2 +  T\|\nabla{\bu}_D\|_{L^2(\omega;\mathbb{R}^2)}^2 \Big)^{\!1/2}\\
			\le &\,   \frac{c\nu}{T}\|{\bu}_0 - {\bv}_s\|_{H(\Omega_s)}\Big(\frac{(c_2+1)T}{\nu^2}\|{\blf}\|_{L^2(\mathcal{D};\mathbb{R}^2)}^2 + \frac{1}{\nu}\|{\bu}_0\|_{L^2(\mathcal{D};\mathbb{R}^2)}^2   +  T\|\nabla{\bu}_D\|_{L^2(\omega;\mathbb{R}^2)}^2 \Big)^{\!1/2}\\
			\le &\,   \frac{c}{T}\left( C_1 + C_2\sqrt{T} \right),
		\end{align*}
		where $C_1 : = C_1( {\bu}_0, {\blf}, 1/\nu, \mathcal{D} )$, and $C_2 := C_2( {\bu}_D, {\blf}, 1/\nu, \mathcal{D} )$. 
		Hence, we get
		\begin{align*}
			J_T^* - J_s^* \le C\left( \frac{1}{T} + \frac{1}{\sqrt{T}} \right).
	\end{align*} }
	
	Similarly, if we consider the solution $\Omega_T$ of \eqref{problem:timedependent}, then $J_s^* \le J_s(\Omega_T)$, hence $J_s^* - J_T^* \le J_s(\Omega_T) - J_T(\Omega_T)$. Moreover, by considering the optimal state ${\bu}_T \in L^\infty(I;H(\Omega_T))\cap L^2(I;V(\Omega_T))$, and the element ${\bv}_T \in V(\Omega_T)$ that solves \eqref{weak:stationary} in $\Omega = \Omega_T$, we can show, by using the same arguments as in the previous step, that
	\begin{align}
		\begin{aligned}
			J_s^* - J_T^* & \le J_s(\Omega_T) - J_T(\Omega_T) \le c\left( \frac{1}{T} + \frac{1}{\sqrt{T}} \right)
		\end{aligned}
		\label{ineq:omegaT}
	\end{align}
	for some $c : = c( {\bu}_0,{\bu}_D, {\blf}, 1/\nu, \mathcal{D} ) > 0$. This proves inequality \eqref{ineq:goal}. 
\end{proof}


As a consequence, we obtain a sense of convergence of solutions of \eqref{problem:timedependent} to a solution of \eqref{problem:stationary}. We formalize this result in the following corollary.
\begin{corollary}
	Suppose that the assumptions in Theorem \ref{theorem:goal} hold; then there exists $\Omega^*\in\mathcal{O}_\omega$, such that $\Omega_T\chiarrow \Omega^*$ as $T\to\infty$, and that $\Omega^*$ solves \eqref{problem:stationary}.
\end{corollary}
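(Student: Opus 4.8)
The plan is to combine the compactness of $\mathcal{O}_\omega$ granted by Lemma \ref{lemma:extensionundclosure} with the quantitative gap estimate \eqref{ineq:goal} and the continuity of the stationary cost along $\chiarrow$-convergent sequences. First I would fix an arbitrary sequence $T_n\to\infty$ and set $\Omega_n:=\Omega_{T_n}$, the corresponding minimizers of \eqref{problem:timedependent}. By the compactness part of Lemma \ref{lemma:extensionundclosure} there is a subsequence (not relabelled) and an element $\Omega^*\in\mathcal{O}_\omega$ with $\Omega_n\chiarrow\Omega^*$; it then suffices to show that every such cluster point is a minimizer of $J_s$.

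To identify $\Omega^*$ I would first extract from the proof of Theorem \ref{theorem:goal} the uniform-in-$\Omega$ estimate
\[
	|J_T(\Omega)-J_s(\Omega)|\le c\left(\tfrac1T+\tfrac1{\sqrt T}\right)\qquad\text{for all }\Omega\in\mathcal{O}_\omega,
\]
with a single constant $c=c({\bu}_0,{\bu}_D,{\blf},1/\nu,\mathcal{D})$. This is legitimate because the reverse-triangle and Cauchy--Schwarz manipulation in that proof bounds $|J_T(\Omega)-J_s(\Omega)|$ in both directions, and every domain-dependent ingredient entering it -- the energy bounds \eqref{estimate:Linf}--\eqref{estimate:L2}, the stationary bound \eqref{estimate:stationary}, the transient gap \eqref{estimate:delta}, and the Poincar\'e constant $\tilde c$ -- is controlled uniformly in $\Omega$ by Theorem \ref{theorem:statewp} and the uniform extension bound of Lemma \ref{lemma:extensionundclosure}. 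Applying this with $\Omega=\Omega_n$ and $T=T_n$ gives $|J_{T_n}(\Omega_n)-J_s(\Omega_n)|\to 0$; since $\Omega_n$ minimizes $J_{T_n}$ the first term equals $J_{T_n}^*$, which tends to $J_s^*$ by \eqref{ineq:goal}, so that $J_s(\Omega_n)\to J_s^*$.

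Independently, the strong convergence \eqref{conv:strongstationary} yields ${\bv}_n\to{\bv}^*$ in $H^1(\omega;\mathbb{R}^2)$, where ${\bv}_n\in V(\Omega_n)$ and ${\bv}^*\in V(\Omega^*)$ solve \eqref{weak:stationary} on $\Omega_n$ and $\Omega^*$. Because $J_s$ depends continuously on its state in the $H^1(\omega;\mathbb{R}^2)$-norm, this gives $J_s(\Omega_n)\to J_s(\Omega^*)$. Comparing the two limits forces $J_s(\Omega^*)=J_s^*$, i.e. $\Omega^*$ solves \eqref{problem:stationary}. Thus every $\chiarrow$-cluster point of $\{\Omega_{T_n}\}$, for any $T_n\to\infty$, is a stationary minimizer.

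The hard part is upgrading this subsequential statement to the full convergence $\Omega_T\chiarrow\Omega^*$ asserted in the corollary. The argument above only places all cluster points inside the minimizing set of $J_s$, so a single limit is obtained precisely when that set is a singleton. I would therefore either impose uniqueness of the solution of \eqref{problem:stationary} -- whence all cluster points coincide and the whole family converges as $T\to\infty$ -- or else read the conclusion up to subsequences. I emphasize that such shape-uniqueness does not follow from the state-uniqueness condition $2^{1/2}\tilde c^2\gamma\|{\blf}\|_{L^2(\mathcal{D};\mathbb{R}^2)}<\nu^2$ and must be argued (or assumed) separately; verifying the uniformity of $c$ over $\mathcal{O}_\omega$ is the remaining, more routine, technical point.
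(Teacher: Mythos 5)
Your proposal is correct and follows essentially the same route as the paper: compactness of $\mathcal{O}_\omega$ in the $L^\infty$-topology of characteristic functions to extract a cluster point $\Omega^*$, the gap estimates \eqref{ineq:goal} and \eqref{ineq:omegaT} to show $J_s(\Omega_{T_n})\to J_s^*$, and the strong convergence \eqref{conv:strongstationary} to show $J_s(\Omega_{T_n})\to J_s(\Omega^*)$. The subsequence caveat you raise at the end is a fair observation, but it applies equally to the paper's own proof, which likewise only establishes convergence along a subsequence of an arbitrary $T_n\to\infty$ and does not address uniqueness of the stationary minimizer.
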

\begin{proof}
	Let $\{T_n\}\subset(0,\infty)$ be a sequence such that $T_n\to\infty$ as $n\to\infty$. For each $n\in\mathbb{N}$, $J_{T_n}$ has a minimizer denoted as $\Omega_{T_n}\in\mathcal{O}_\omega$, hence we obtain a sequence $\{\Omega_{T_n}\}\subset\mathcal{O}_\omega$ and an element $\Omega^*\in\mathcal{O}_\omega$ such that $\Omega_{T_n}\chiarrow\Omega^*$. From \eqref{conv:strongstationary},  ${\bv}_{T_n}\to {\bv}^*$ in $H^1(\omega;\mathbb{R}^2)$, where ${\bv}_{T_n}\in V(\Omega_{T_n})$ solves \eqref{weak:stationary} in $\Omega = \Omega_{T_n}$ and ${\bv}^*\in V(\Omega^*)$ is a solution of \eqref{weak:stationary} in $\Omega=\Omega^*$. 
	
	From \eqref{ineq:goal}, \eqref{ineq:omegaT} and $J_{T_n}^* := J_{T_n}(\Omega_{T_n})$, we now get
	\begin{align*}
		| J_s^* -J_s(\Omega^*)| \le&\, |J_s^* - J_{T_n}^*| + |J_{T_n}(\Omega_{T_n}) - J_s(\Omega_{T_n})| + |J_s(\Omega_{T_n}) - J_s(\Omega^*)|\\
		\le&\, 2c\left( \frac{1}{T_n} + \frac{1}{\sqrt{T_n}}  \right) +  |J_s(\Omega_{T_n}) - J_s(\Omega^*)|.
	\end{align*}
	For the term $|J_s(\Omega_{T_n}) - J_s(\Omega^*)|$, we have the following computations
	\begin{align*}
		|J_s(\Omega_{T_n})& - J_s(\Omega^*)| \le c\nu \left|\|{\bv}_{T_n} - {\bu}_D\|_{H^1(\omega;\mathbb{R}^2)}^2 - \|{\bv}^* - {\bu}_D\|_{H^1(\omega;\mathbb{R}^{2})}^2 \right|\\
		\le &\, c\nu \left|\|{\bv}_{T_n} - {\bu}_D\|_{H^1(\omega;\mathbb{R}^2)} - \|{\bv}^* - {\bu}_D\|_{H^1(\omega;\mathbb{R}^{2})} \right|\times \left|\|{\bv}_{T_n} - {\bu}_D\|_{H^1(\omega;\mathbb{R}^2)} + \|{\bv}^* - {\bu}_D\|_{H^1(\omega;\mathbb{R}^{2})} \right|\\
		\le &\, c\nu \|{\bv}_{T_n} - {\bv}^*\|_{H^1(\omega;\mathbb{R}^2)}\big( \|{\bv}_{T_n}\|_{H^1(\omega;\mathbb{R}^2)} + \|{\bv}^*\|_{H^1(\omega;\mathbb{R}^2)} + \|{\bu}_D\|_{H^1(\omega;\mathbb{R}^2)} \big)
	\end{align*}
	By letting $n\to \infty$ and from \eqref{conv:strongstationary}, the inequality above implies that $|J_s(\Omega_{T_n}) - J_s(\Omega^*)|\to 0$, and therefore \[| J_s^* -J_s(\Omega^*)|\le 0.\] 
\end{proof}


\section{Numerical Realization}\label{section:6}

In this section we illustrate the convergence numerically. For simplicity, we only consider the $L^2$ version of the objective functions, i.e., we consider 
\begin{align}
	\left.
	\begin{aligned}
		\min_{\Omega\in \mathcal{O}_{\omega}} J_{T,\times}(\Omega) := \frac{\nu}{T}&\int_0^T \|{\bu}(t) - {\bu}_D\|_{L^2(\omega;\mathbb{R}^{2})}^2 \du t\quad \text{subject to }\eqref{system:timedependent},
	\end{aligned}
	\right.
	\label{problem:extimedependent}
\end{align}
and 
\begin{align}
	\left.
	\begin{aligned}
		\min_{\Omega\in \mathcal{O}_{\omega}} J_{s,\times}(\Omega) :=\nu\|{\bv} - {\bu}_D\|_{L^2(\omega;\mathbb{R}^{2})}^2\quad \text{subject to }\eqref{system:stationary}.
	\end{aligned}
	\right.
	\label{problem:xstationary}
\end{align}
Note that the analyses done in the previous sections hold true due to Poincar{\'e} inequality.

To solve the problem numerically, we shall rely on a gradient descent method induced by the identity perturbation operator. In particular, for a given domain $\Omega\in\mathcal{O}_{\omega}$, we consider the following family of perturbed domains $\{\Omega_\tau:=T_\tau(\Omega): 0\le\tau\le\tau_0 \}\subset\mathcal{O}_\omega$, where $T_\tau:\mathcal{D}\to \mathbb{R}^2$ is defined as $T_\tau(x) = x + \tau\theta(x)$ for any $x\in \mathcal{D}$ and $\tau\in[0,\tau_0]$, where $\theta\in \Theta :=\{\theta\in C^{1,1}(\mathcal{D};\mathbb{R}^2): \theta = 0 \text{ on }\partial\mathcal{D} \}$ is known as the deformation field and $\tau_0$ is a given threshold parameter so as to make sure that $\{\Omega_\tau:=T_\tau(\Omega): 0\le\tau\le\tau_0 \}\subset\mathcal{O}_\omega$ and that the translated states are well-posed. For more details, we refer to \cite{zolesio2011} or \cite{sokolowski1992}.  We compute the shape derivative of the objective functions in the sense of Hadamard's, that is, the shape derivative of a given objective function $\mathcal{J}:\mathcal{O}_\omega\to\mathbb{R}$ in the direction of $\theta\in\Theta$  is denoted by $d\mathcal{J}(\Omega)\theta$ and is defined as
\[
d\mathcal{J}(\Omega)\theta = \lim_{\tau\searrow0}\frac{\mathcal{J}(\Omega_\tau)-\mathcal{J}(\Omega)}{\tau}.
\]

The shape derivatives of $J_{T,\times}$ and $J_{s,\times}$ have been computed by several authors, hence we skip such step in this exposition. Refer to \cite{gao2008,ito2008,kasumba2013b,pironneau2010} among others for such computations. Nevertheless, we give such derivatives below:{
	\begin{align}
		dJ_{T,\times}(\Omega)\theta &= \frac{\nu}{T} \left[\int_{\partial\Omega}\int_0^T\left( \frac{\partial{\bu}(t)}{\partial{\bn}}\cdot\frac{\partial{\bw(t)}}{\partial{\bn}} \right)\theta\cdot{\bn}\du t\du\sigma  +\int_\Omega\int_0^T \nabla\cdot( \chi_\omega|{\bu}(t) - {\bu}_D|^2\theta)\du t\du x \right],\\
		dJ_{s,\times}(\Omega)\theta &={\nu} \int_{\partial\Omega}\left( \frac{\partial{\bv}}{\partial{\bn}}\cdot\frac{\partial{\bz}}{\partial{\bn}} \right)\theta\cdot{\bn}\du\sigma  +\int_\Omega \nabla\cdot( \chi_\omega|{\bv} - {\bu}_D|^2\theta)\du x ,
\end{align}}
where ${\bw}\in L^\infty(I;H(\Omega))\cap L^2(I;V(\Omega))$ is the adjoint variable for the time dependent problem  that solves the following variational problem
\begin{align}
	&\begin{aligned}
		{}_{V^*(\Omega)}\langle -\partial_t{\bw}(t), {\bphi}\rangle_{V(\Omega)} + &\nu(\nabla{\bw}(t),\nabla{\bphi})_\Omega + \gamma(({\bphi}\cdot\nabla){\bu}(t),{\bw}(t))_{\Omega}\\ &- \gamma(({\bu}(t)\cdot\nabla){\bw}(t),{\bphi})_{\Omega} = 2({\bu}(t)-{\bu}_D,{\bphi})_{\omega} \quad\forall{\bphi}\in V(\Omega),
	\end{aligned}
	\label{weak:timeadjoint}
\end{align}
and satisfies the transversality condition $w(T) = 0$, while ${\bz}\in V(\Omega)$ solves the following weak equation
\begin{align}
	& \nu(\nabla{\bz},\nabla{\bphi})_\Omega + \gamma(({\bphi}\cdot\nabla){\bv},{\bz})_{\Omega}- \gamma(({\bv}\cdot\nabla){\bz},{\bphi})_{\Omega} = 2({\bv}-{\bu}_D,{\bphi})_{\omega} \quad\forall{\bphi}\in V(\Omega).
	\label{weak:adjoint}
\end{align}

Note that we can express both derivatives in the form of the Zolesio-Hadamard structure, i.e.,
\[
d\mathcal{J}(\Omega)\theta = \int_{\partial\Omega} \nabla J{\bn}\cdot\theta \du\sigma,
\]
where $\nabla J$ is called the shape gradient. In particular, by virtue of the divergence theorem, we get the following shape gradients
\begin{align*}
	\nabla J_{T,\times} &= \frac{\nu}{T}\int_0^T \frac{\partial{\bu}(t)}{\partial{\bn}}\cdot\frac{\partial{\bw(t)}}{\partial{\bn}} + \chi_\omega|{\bu}(t) - {\bu}_D|^2 \du t,\\
	\nabla J_{s,\times} &= {\nu}\left[ \frac{\partial{\bv}}{\partial{\bn}}\cdot\frac{\partial{\bz}}{\partial{\bn}} + \chi_\omega|{\bv} - {\bu}_D|^2 \right],
\end{align*}
for the time-dependent and stationary objective functions, respectively. These shape gradients will be the basis of our descent directions, that is, by choosing $\theta = -\nabla J{\bn}$ in $\partial\Omega$ we are assured that 
\[
d\mathcal{J}(\Omega)\theta = -\|\theta\|^2_{L^2(\partial\Omega;\mathbb{R}^2)}.
\]
Numerically though, such choice of descent direction may cause oscillations on the perturbed domains $\Omega_\tau$. Because of that, we shall resort to a traction method that intends to extend the choice $\theta = -\nabla J{\bn}$ to the whole domain, say for example by a Robin boundary problem which we shall briefly discuss later.

The variational equations are solved using Galerkin finite element methods. For the nonlinearity of the stationary Navier--Stokes equations, we employ Newton's method \cite{girault1986}, while for the dynamic Navier-Stokes equations and the time-dependent adjoint equations we utilize a Lagrange-Galerkin method based on characteristics (see for example \cite{notsu2016}).  Since the stationary adjoint equation is a linear system, we utilize the usual Galerkin method.

For the resolution of the deformation fields, we shall utilize an $H^1$-gradient based method \cite{azegami2006}. For the deformation field of the stationary problem, we solve the following variational problem:
\begin{align}
	\varepsilon (\nabla\theta,\nabla\bphi)_\Omega + (\theta,\bphi)_{\partial\Omega} = -(\nabla J_{s,\times}{\bn},{\bphi})_{\partial\Omega}\quad \forall{\bphi}\in H^1_0(\mathcal{D};\mathbb{R}^2).
	\label{deform:stationary}
\end{align}
Meanwhile, due to the time-dependent nature of the shape gradient of \eqref{problem:timedependent} we propose a time-averaged deformation field, in particular, by letting $K(\bu,\bw)(t)$ be such that $\nabla J_{T,\times} =\frac{1}{T} \int_0^T K(\bu,\bw)(t)\du t$, then we aim to solve for $\vartheta(t)\in H^1_0(\mathcal{D};\mathbb{R}^2) $ that satisfies, for any $t\in[0,T]$, the equation
\begin{align}
	\varepsilon (\nabla\vartheta(t),\nabla\bphi)_\Omega + (\vartheta(t),\bphi)_{\partial\Omega} = -(K(\bu,\bw)(t){\bn},{\bphi})_{\partial\Omega}\quad \forall{\bphi}\in H^1_0(\mathcal{D};\mathbb{R}^2).
	\label{deform:instationary}
\end{align}
From these time-dependent {\it vector fields}, we then determine the deformation field by $\theta = \frac{1}{T}\int_0^T\vartheta\du t$.

Note that in both equations \eqref{deform:stationary} and \eqref{deform:instationary}, $\varepsilon>0$ can be chosen small enough so that $\theta \approx -\nabla J_{s,\times}{\bn}$ and $\vartheta(t)\approx -K({\bu},{\bw})(t){\bn}$ on $\partial\Omega$.

\subsection{Finite-Dimensional Approximation Schemes and Optimization Algorithms} 
The variational problems, as previously mentioned, will be solved using finite element methods.  Let $\mathcal{T}_h = \{K\}$ be a regular triangulation of a domain $\Omega\in\mathcal{O}_\omega$ so that $\cup_{\mathcal{T}_h} K =:\Omega_h\subset\Omega $ and that there exists $\mathcal{T}_{h,\omega}\subset\mathcal{T}_h$ such that $\cup_{\mathcal{T}_{h,\omega}} K=:\omega_h$ is a discretization of $\omega$, and $\mathbb{P}^k(K;\mathbb{R}^d)$ be the space of $k^{th}$ degree polynomials from $K$ onto $\mathbb{R}^d$, we consider the following space Taylor-Hood finite element spaces
\begin{align*}
	X_h &:= \{{\bv}_h\in C(\overline{\Omega};\mathbb{R}^2): {\bv}_h|_{K}\in \mathbb{P}^2(K;\mathbb{R}^2),\, \forall K\in\mathcal{T}_h \},\\
	M_h &:= \{{q}_h\in C(\overline{\Omega};\mathbb{R}): {\bv}_h|_{K}\in \mathbb{P}^1(K;\mathbb{R}),\, \forall K\in\mathcal{T}_h \},
\end{align*}
$V_h := X_h \cap H_0^1(\Omega;\mathbb{R}^2)$, $W_h:= X_h\cap H^1(\Omega;\mathbb{R}^2)$, $Q_h:= M_h\cap L^2(\Omega;\mathbb{R})$, and $Y_h := V_h\times M_h$.

For approximating the solution to the state equation \eqref{weak:stationary}, let $\mathcal{F}_h:Y_h\to Y_h^*$ be an operator defined as 
\begin{align*}
	{}_{Y_h^*}\langle \mathcal{F}_h({\bv},q),({\bphi},\psi)\rangle_{Y_h} := \nu(\nabla{\bv},\nabla{\bphi})_{\Omega_h}& + \gamma(({\bv}\cdot\nabla){\bv},{\bphi})_{\Omega_h} - (\nabla\cdot{\bv},\psi)_{\Omega_h} - (\nabla\cdot{\bphi},q)_{\Omega_h}  - ({\blf},{\bphi})_{\Omega_h}.
\end{align*}
Since $b({\bv},\psi) := -(\nabla\cdot{\bv},\psi)_{\Omega_h} $ satisfies the inf-sup condition in our particular choice of finite element spaces, we are assured of the existence of the solution $({\bv}_h,q_h)\in Y_h$ to $\mathcal{F}_h({\bv}_h,q_h) = 0 $ in $Y_h^*$. Furthermore,  the first coordinate of the solution $({\bv}_h,q_h)\in Y_h$ satisfies \eqref{weak:stationary} in its discretized form. Of course, the integrals on the discrete domain $\Omega_h$ are understood as discrete integrals, say for example by Gaussian quadrature.

Due to the nonlinearity of $\mathcal{F}_h$, its root will be approximated using Newton's scheme, so that for an initial point $({\bv}^0_h,q^0_h)\in Y_h$, a sequence $\{({\bv}^k_h,q^k_h)\}\subset Y_h$ is generated by solving the equation
\begin{align}
	({\bv}^{k+1}_h,q^{k+1}_h) = ({\bv}^k_h,q^k_h) - [\mathcal{F}_h'({\bv}^k_h,q^k_h)]^{-1}\cdot \mathcal{F}_h({\bv}^k_h,q^k_h),
	\label{newton:dualform}
\end{align}
or equivalently, by writing $(\delta{\bv}^k_h,\delta q^k_h) = ({\bv}^{k+1}_h,q^{k+1}_h)-({\bv}^k_h,q^k_h)$,
\begin{align}
	{}_{Y_h^*}\langle \mathcal{F}_h'({\bv}^k_h,q^k_h)(\delta{\bv}^k_h,\delta q^k_h),({\bphi},\psi)\rangle_{Y_h} = - {}_{Y_h^*}\langle \mathcal{F}_h({\bv}^k_h,q^k_h),({\bphi},\psi)\rangle_{Y_h}
	\label{newton:primalform}
\end{align}
The existence of solutions to \eqref{newton:dualform} and \eqref{newton:primalform} are due to the isomorphism of $\mathcal{F}_h\in\mathcal{L}(Y_h,Y_h^*)$ which is a consequence of the uniqueness assumption $2^{1/2}\gamma \tilde{c}^2\|{\blf}\|_{L^2(\mathcal{D};\mathbb{R}^2)}<\nu^2$. We end the generation of the elements of the sequence $\{({\bv}^k_h,q^k_h)\}\subset Y_h$ when we reach a certain tolerance which is measured by $\|\delta{\bv}^k \|_{V(\Omega)}/\|{\bv}^k \|_{V(\Omega)}$.

The approximation ${\bz}_h\in V_h$ of the adjoint variable ${\bz}\in V(\Omega)$, on the other hand, is done by solving the variational equation
\begin{align}
	{}_{Y_h^*}\langle \mathcal{F}_h'({\bv}^{\star}_h,q^{\star}_h)({\bphi},\psi),({\bz}_h,\pi_h)\rangle_{Y_h} = 2({\bv}_h^{\star} - {\bu}_{D},{\bphi})_{\omega_h} \quad \forall({\bphi},\psi)\in Y_h,
	\label{newton:adjoint}
\end{align}
where $({\bv}^{\star}_h,q^{\star}_h)\in Y_h$ is the approximation of the solution of \eqref{weak:stationary} yielding from Newton's scheme, and $\pi_h\in Q_h$ is the adjoint pressure.

For the time-dependent problems, we utilize Lagrange-Galerkin methods. An upwind Lagrange-Galerkin method is intended to solve the state equation \eqref{weak:timedependent} while a downwind method is for the adjoint equation \eqref{weak:timeadjoint}. Let us define the material derivative $D/Dt$ by
\begin{align*}
	\frac{D[\bullet]}{Dt} := \frac{\partial[\bullet]}{\partial t} + \gamma({\bu}\cdot\nabla)[\bullet].
\end{align*}
We shall consider characteristic lines that solve the differential equation
\begin{align}
	\frac{d\boldsymbol{x}}{dt} = \gamma{\bu}(\boldsymbol{x}(t),t),
	\label{ode:char}
\end{align}
so that for sufficiently smooth ${\bu}$, and velocity field ${\bw}:\Omega\times[0,T]\to\mathbb{R}^2$
\begin{align*}
	\frac{D{\bw}}{Dt} = \frac{d}{dt}{\bw}(\boldsymbol{x}(t),t).
\end{align*}

Let $\Delta t = T/N$ be a time increment over $N$ subintervals of the interval $[0,T]$,  $t^n:=n\Delta$, and $h:\Omega\times[0,T]\to \mathbb{R}^d$, we denote the evaluation $h(\cdot,t^n)$ by $h^n$. Let $x\in \mathbb{R}^2$. The solution to \eqref{ode:char} with initial value $\boldsymbol{x}(t^n) = x$ will be denoted as $\boldsymbol{x}(\cdot;x,t^n)$. From a velocity ${\bu}$, we will utilize the upwind point of $x\in\mathbb{R}^2$ with respect to ${\bu}$ which is defined as and denoted by $\boldsymbol{x}_u({\bu},\Delta t)(x) : = x-{\bu}\Delta t$, and the downwind point of $x$ with respect to ${\bu}$ defined as and denoted by $\boldsymbol{x}_d({\bu},\Delta t)(x) := x + {\bu}\Delta t$. From these directions, we have the following approximations
\begin{align*}
	\boldsymbol{x}_u({\bu}^{n-1},\Delta t) & \approx \boldsymbol{x}(t^{n-1};x,t^n),\\
	\boldsymbol{x}_d({\bu}^{n+1},\Delta t) & \approx \boldsymbol{x}(t^{n+1};x,t^n).
\end{align*}
We can then consider a first order forward in-time approximation of the material derivative at $(x,t^n)$ by
\begin{align*}
	\frac{D \bw}{D t}(x,t^n) 	& = \frac{d}{dt}{\bw}(\boldsymbol{x}(t;x,t^n),t)\big|_{t = t_n}\\
	& = \frac{{\bw}(\boldsymbol{x}(t^n;x,t^n),t^n) - {\bw}(\boldsymbol{x}(t^{n-1};x,t^n),t^{n-1})}{\Delta t}\\
	& \approx \frac{{\bw}^n - {\bw}^{n-1}\circ\boldsymbol{x}_u({\bu}^{n-1},\Delta t)}{{\Delta t}}(x).
\end{align*}
Meanwhile, for the first order backward in-time approximation, we have
\begin{align*}
	\frac{D \bw}{D t}(x,t^n) 	& = \frac{d}{dt}{\bw}(\boldsymbol{x}(t;x,t^n),t)\big|_{t = t_n}\\
	& = \frac{{\bw}(\boldsymbol{x}(t^{n+1};x,t^n),t^{n+1}) - {\bw}(\boldsymbol{x}(t^{n};x,t^n),t^{n})}{\Delta t}\\
	& \approx \frac{{\bw}^{n+1}\circ\boldsymbol{x}_d({\bu}^{n+1},\Delta t) - {\bw}^n}{{\Delta t}}(x).
\end{align*}

From these, we can approximate the solutions to \eqref{weak:timedependent} and \eqref{weak:timeadjoint} as follows. For the Navier--Stokes equations \eqref{weak:timedependent}, by letting $(\bu_h^0,p_h^0)\in Y_h$ the projection of $(\bu_0,0)\in (H(\Omega)\cap L^2(\mathcal{D};\mathbb{R}^2))\times L^2_0(\Omega;\mathbb{R})$, we generate the sequence $\{({\bu}_h^n,p_h^n)\}_{n=1}^N\subset Y_h$ that satisfies, for each $n = 1,2,\ldots,N$, the equation
\begin{align}
	\begin{aligned}
		\Big(\frac{{\bu}_h^n - {\bu}_h^{n-1}\circ\boldsymbol{x}_u({\bu}_h^{n-1},\Delta t)}{{\Delta t}},&\bphi  \Big)_{\Omega_h} +  \nu(\nabla{\bu}_{h}^n,\nabla{\bphi})_{\Omega_h}\\ & - (\nabla\cdot{\bphi},p_h^n )_{\Omega_h} - (\nabla\cdot{\bu}_h^n,\psi)_{\Omega_h} = ({\blf},{\bphi})_{\Omega_h}
	\end{aligned}\quad\forall({\bphi},\psi)\in Y_h.
	\label{LG:navstokes}
\end{align} 
While the adjoint equation \eqref{weak:timeadjoint} is approximated in a backward manner, so that for $({\bw}_h^N,\pi_h^N) = (0,0)\in Y_h$, the sequence $\{({\bw}_h^m,\pi_h^n)\}_{n=0}^{N-1}\subset Y_h$ is generated by the difference equation
\begin{align}
	\begin{aligned}
		\Big(\frac{{\bw}_h^n - {\bw}_h^{n+1}\circ\boldsymbol{x}_d({\bu}_h^{n+1},\Delta t)}{{\Delta t}}&,\bphi  \Big)_{\Omega_h} + \nu (\nabla{\bw}_h^n,\nabla{\bphi})_{\Omega_h} + \gamma([\nabla{\bu}_h^n]^\top{\bw}_h^n,{\bphi} )_{\Omega_h}\\ 
		-& (\nabla\cdot{\bphi},\pi_h^n )_{\Omega_h} - (\nabla\cdot{\bw}_h^n,\psi)_{\Omega_h} = 2({\bu}_h^n - {\bu}_{D},{\bphi})_{\omega_h}
	\end{aligned}\ \ \forall({\bphi},\psi)\in Y_h.
	\label{LG:adjoint}
\end{align}

Due to the linear nature of the Robin problems \eqref{deform:stationary} and \eqref{deform:instationary}, we can solve them quite easily. In particular, for the approximation of the deformation field of the stationary problem, we have the following discretized equation:
\begin{align}
	\varepsilon(\nabla\theta_h,\nabla\varphi)_{\Omega_h} + (\theta_h,\varphi)_{\partial\Omega_h} = - (\nabla J_{s,\times,h},\varphi)_{\partial\Omega_h} \quad\forall \varphi\in W_h,
	\label{discretedeform:stationary}
\end{align}
where $\nabla J_{s,\times,h}$ is the evaluation of the shape gradient $\nabla J_{s,\times}$ at the discrete solutions ${\bv}_h^\star$, ${\bz}_h$. Similarly, we solve the Robin problems, for each $t^n = n\Delta$ (same as the time discretization previously discussed), as 
\begin{align}
	\varepsilon(\nabla\vartheta_h(t^n),\nabla\varphi)_{\Omega_h} + (\vartheta_h(t^n),\varphi)_{\partial\Omega_h} = -(K({\bu}_h^n,{\bw}_h^n)(t^n),\varphi)_{\partial\Omega_h}\quad \forall\varphi\in W_h.
	\label{discretedeform:instationary}
\end{align}
We then solve the time-averaged deformation field for the time-dependent problem using a trapezoidal rule given by
\begin{align}
	\theta_h = \frac{1}{N}\left( \frac{1}{2}\vartheta_h(t^0) + \sum_{k=1}^{N-1}\vartheta_h(t^k) + \frac{1}{2}\vartheta_h(t^N)\right).
	\label{deformtrap}
\end{align}

For the choice of the gradient descent step size, we utilize an Armijo-Goldstein-type line search method. In particular, for a general objective function $\mathcal{J}$ (may it be the stationary or the time-dependent objective function) with the deformation field $\theta$,  for some $\alpha\in(0,1]$ we initially choose the step size
\begin{align}
	\tau = \alpha\frac{\mathcal{J}(\Omega)}{\|\theta\|_{L^2(\partial\Omega;\mathbb{R}^2)}}.
	\label{stepsize}
\end{align}
Since this choice of step size is not sufficient to ensure the descent of the objective function, we employ a backtracking scheme, i.e.,  we choose the smallest $i\in\mathbb{N}$ so that $\mathcal{J}(T_{i}(\Omega)) < \mathcal{J}(\Omega)$ and that $\omega\subset T_i(\Omega)$, where $T_{i} (x) = x + [(0.5)^i\tau]\theta(x)$.

With the ingredients presented above, we lay down the iterative scheme upon which we solve the minimization problems. For the stationary problem, we have the following algorithm\footnote{The steps except that of {\bf Step 0} are inside a \texttt{for loop}.}:
\begin{pethau}
	\item[Step 0.] Choose an initial guess $\Omega_h^0$, and determine the solution ${\bv}_{h}^{\star}(\Omega_h^0)$ of the state equation via Newton's scheme \eqref{newton:primalform} in $\Omega_h^0$.
	\item[Step 1.] Evaluate  $J_{s,\times}(\Omega_h^k)$, and solve for the adjoint variable ${\bz}_h(\Omega_h^k)$ in $\Omega_h^k$ from \eqref{newton:adjoint} and the deformation field $\theta_h(\Omega_h^k)$ from \eqref{discretedeform:stationary} in $\Omega_h^k$;
	\item[Step 2.] Update the domain by $\Omega_h^{k+1} := T_i(\Omega_k)$, solve for the state solution ${\bv}_h^{\star}(\Omega_{k+1})$ from Newton's scheme \eqref{newton:primalform} in $\Omega_{k+1}$, and evaluate $J_{s,\times}(\Omega_h^{k+1})$. 
	\item[Step 3.] If $J_{s,\times}(\Omega_h^{k+1})<J_{s,\times}(\Omega_h^k)$ accept $\Omega_h^{k+1}$ as the new domain, else increase the value of $i\in \mathbb{N}$ and repeat {\bf Step 2}.
\end{pethau}

For the time dependent problem, let us first discuss the method by which we evaluate the objective function $J_{T,\times}$. In fact, we shall use a trapezoidal rule, i.e.,  using the same time discretization as above and a discretized domain $\Omega_h$ we have
\begin{align}
	J_{T,\times}(\Omega_h) \approx \frac{1}{N}\left( \frac{1}{2} \int_{\omega_h} |{\bu}_h^0 - {\bu}_D|^2 \du x + \sum_{k=1}^{N-1} \int_{\omega_h} |{\bu}_h^k - {\bu}_D|^2 \du x + \frac{1}{2}\int_{\omega_h} |{\bu}_h^N - {\bu}_D|^2 \du x\right),
\end{align}
where $\{{\bu}_h^k\}_{k=0}^N$ is the Lagrange--Galerkin approximation of the Navier--Stokes solution from \eqref{LG:navstokes}. From these, we present the following algorithm:
\begin{pethau}
	\item[Step 0.] Choose an initial guess $\Omega_h^0$, and determine the solution $\{{\bu}_{h}^j(\Omega_h^0)\}_{j=0}^N$ of the state equation from \eqref{LG:navstokes} in $\Omega_h^0$.
	\item[Step 1.] Evaluate  $J_{T,\times}(\Omega_h^k)$, and solve for the adjoint variable $\{{\bw}_h^j(\Omega_h^k)\}_{j=0}^N$ in $\Omega_h^k$ from \eqref{LG:adjoint} and the deformation field $\theta_h(\Omega_h^k)$ from \eqref{discretedeform:instationary} and \eqref{deformtrap};
	\item[Step 2.] Update the domain by $\Omega_h^{k+1} := T_i(\Omega_k)$, solve for the state solution $\{{\bu}_{h}^j(\Omega_h^0)\}_{j=0}^N$ from \eqref{LG:navstokes} in $\Omega_{k+1}$, and evaluate $J_{s,\times}(\Omega_h^{k+1})$. 
	\item[Step 3.] If $J_{s,\times}(\Omega_h^{k+1})<J_{s,\times}(\Omega_h^k)$ accept $\Omega_h^{k+1}$ as the new domain, else increase the value of $i\in \mathbb{N}$ and repeat {\bf Step 2}.
\end{pethau}

\subsection{Numerical Implementation}
The finite element problems are solved using \texttt{FreeFem++} \cite{hecht2012} on an Intel Core i7 CPU $@$ 3.80 GHz with 64GB RAM and the codes are stored in the repository \url{https://github.com/jhsimon/NSShapeOptiLongTime}.  For simplicity, we choose the source function to be ${\blf} = \frac{1}{10}(y^3,-x^3)$, the desired function ${\bu}_D$ is determined by solving the stationary Stokes version of the state equations (i.e., with $\gamma=0$) with $\nu = 1/5$ in a domain enclosed in a circle that satisfies $x^2 + y^2 = 4$, and the domain $\omega\subset\mathbb{R}^2$ is the set $\{(x,y)\in\mathbb{R}^2: x^2+y^2\le 1 \}$.  The shape optimization problems \eqref{problem:extimedependent} and \eqref{problem:xstationary} - including the state equations \eqref{weak:timedependent} and \eqref{weak:stationary} that respectively constrain them- are then solved with the assumption that $\nu = \gamma = 1$.  

Due to the uniqueness assumption for the stationary Navier--Stokes equations, we know that the solution yielding from Newton's scheme is a local solution in a branch of nonsingular solutions around the trivial solution ${\bv} \equiv 0$ \cite{girault1986}.  From this point of view, we can choose the initial velocity of the time-dependent Navier--Stokes to be  ${\bu}_0=0$.

For the resolution of the deformation fields, the value $\varepsilon=0.05$ is chosen for both \eqref{deform:stationary} and \eqref{deform:instationary}, and the value $\alpha = 1$ is used for the step size coefficient in \eqref{stepsize}.  For simplicty, the initial domain $\Omega^0$ is defined as the region bounded by the ellipse $9x^2 + 4y^2 = 36$ and is discretized with constant diameter $h = 1/10$. We also mention that due to the tendency of the deformed domains to be degenerate, we employ a mesh refinement at the end of each iterative loop so that the new domain is regular with diameter $h = 1/10$.  Lastly, we terminate the optimization loop when $[\mathcal{J}(\Omega_h^{k+1})-\mathcal{J}(\Omega_h^{k}) ] /\mathcal{J}(\Omega_h^{k+1}) < \texttt{tol} = 1\times 10^{-6}$.

For the stationary problem, Figure \ref{fig1}(A) shows the evolution of the boundary $\partial\Omega$ which is initially chosen as an ellipse and turned into a circle with radius of approximately 3.25 units after 20 iterations which we denote as $\partial\Omega_h^{20} = \partial\Omega_{s,h}$. Figure \ref{fig1}(B) on the other hand shows the decreasing trend of the objective functional on each iterate.

\begin{figure}[h]
 \centering
  \includegraphics[width=1\textwidth]{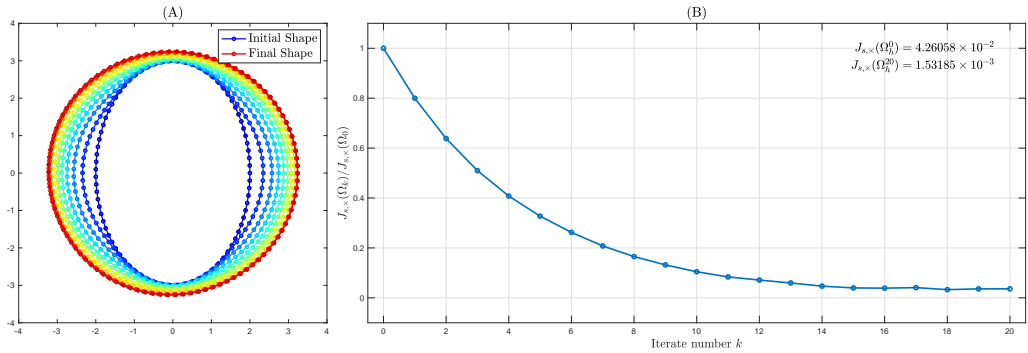}
 \caption{ Evolution of the boundary $\partial\Omega$ from the iterative scheme (A), and the normalized trend of the objective function values at each iteration (B). }
 \label{fig1}
 \end{figure}

To show the convergence of the solutions of the time dependent problems, we implement the numerical simulations with varying terminal time given as $T = 1,2,4,8,16,32,64,$ and $128$, upon which the time discretization is done with a fixed time increment $\Delta t = 0.2$.  For each final time $T$, we denote the final solution as $\Omega_{T,h}$, with boundary $\partial\Omega_{T,h}$. We compare the numerical final solutions in Figure \ref{fig2}(A), where it can be seen that the boundary of the solutions $\partial\Omega_{T,h}$ becomes closer to the boundary $\partial\Omega_{s,h}$ as the terminal time $T$ gets bigger. Figure \ref{fig2}(B) shows the log-log plot of the gap $|J_{T,\times}-J_{s,\times}|$ versus the terminal time $T$. In the same figure, we plotted the log-log plots of $\mathcal{O}(T^{-1})$ and $\mathcal{O}(T^{-1/2})$ to have a gauge on the experimental order of convergence. As expected, we can see that for lower values of $T$, the order of convergence nearly follows that of $\mathcal{O}(T^{-1/2})$, while for the higher values of $T$ we observe a convergence that is similar with that of $\mathcal{O}(T^{-1})$.

\begin{figure}[h]
 \centering
  \includegraphics[width=1\textwidth]{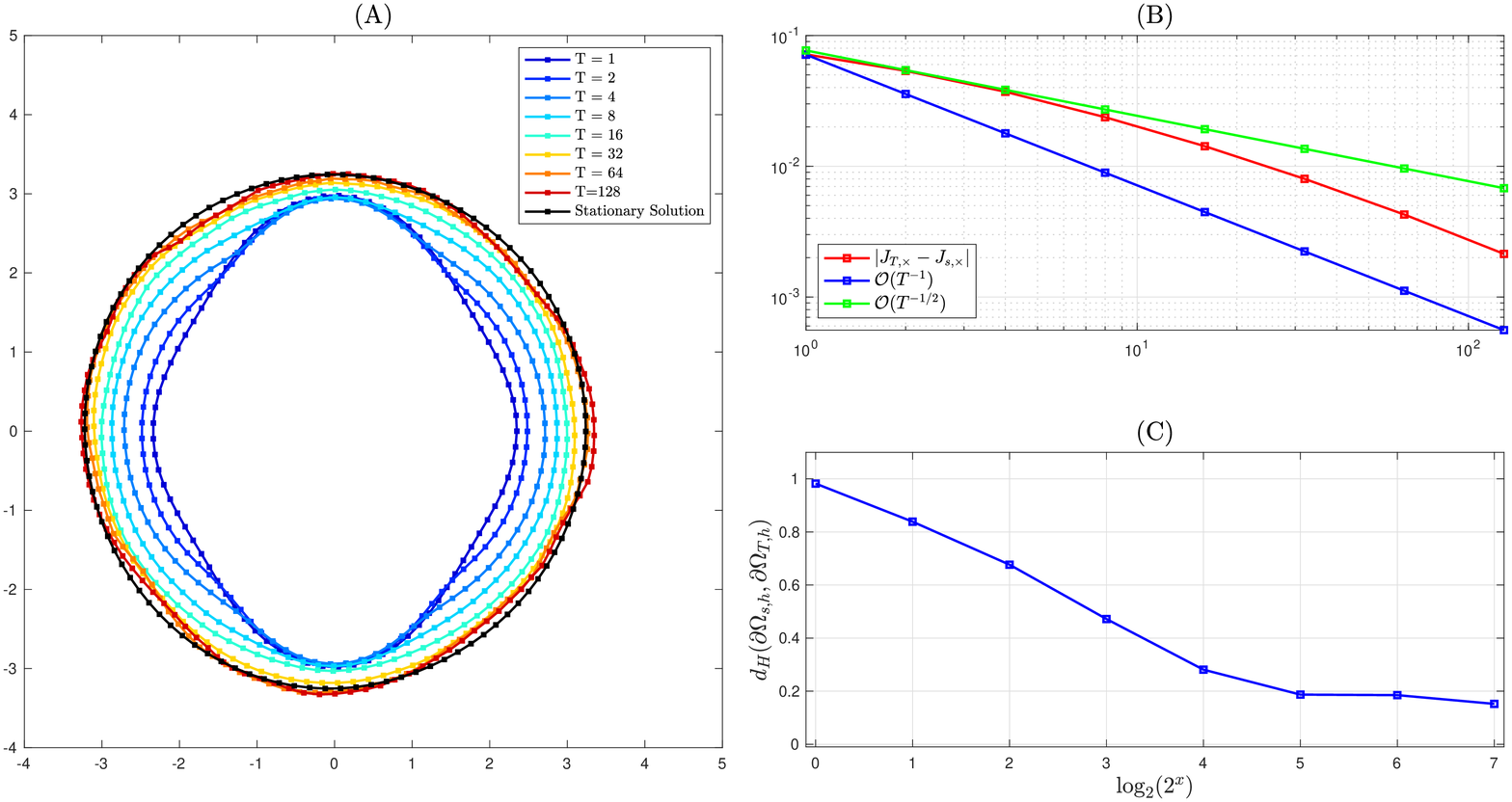}
 \caption{ Illustration of how the boundary of the shape solution of the time-dependent problem \eqref{problem:extimedependent} converges to the boundary of the solution of the equilibrium problem \eqref{problem:xstationary} as $T$ gets larger (A); log-log plots of $|J_{T,\times}-J_{s,\times}|$,  $\mathcal{O}(T^{-1})$, and $\mathcal{O}(T^{-1/2})$(B); trend of the Hausdorff distance between the solutions of \eqref{problem:extimedependent} and \eqref{problem:xstationary} (C). }
 \label{fig2}
 \end{figure}

Lastly, we quantified the convergence of the boundaries $\partial\Omega_{T,h}$ to the boundary $\partial\Omega_{s,h}$ by the virtue of the Hausdorff distance, which is - for any set $A,B$ - denoted and computed as 
\begin{align}
	d_{H}(A,B) : = \max\left\{\sup_{x\in A}d(x,B),\sup_{y\in B}d(A,y) \right\}
\end{align}
where the distance between a set $B$ and a point $x$ is defined as $d(x,B) = \displaystyle\inf_{y\in B}d(x,y)$, and where $d(x,y)$ denotes the distance between $x$ and $y$, which in this case is the usual Euclidean distance. From Figure \ref{fig2}(C), we observe that the Hausdorff distance indeed gets smaller as the value of $x$, which is such that $T = 2^x$, increases.

Before we end this section, let us point out that due to the choice of the viscosity constant $\nu$ in the resolution of the state equations, the advective effects on the fluid is almost negligible. In short, the flow almost mimics that of the Stokes equations. For this reason, we observe an {\it almost full convergence} which is theoretically true for the Stokes versions, and in fact the elliptic/parabolic versions, of our shape design problems.

\section{Conclusion}

In this work, we were able to establish an estimate for the gap between the minimum value of the objective functionals of the dynamic and stationary problems.  In particular, we were able to show that the gap decreases as the time horizon gets large. Furthermore,  we established that as the time horizon goes to infinity the shape solution converges to a domain that is around the neighborhood of a solution of the stationary problem. Lastly, we numerically illustrated the convergence by virtue of the traction method. Here, we have shown that visually, and quantitatively -- by measuring the gap between the optimal value of the stationary and the time-dependent shape design problems with varying terminal time $T$ and with the help of the Hausdorff distance -- the solutions to the dynamic problem indeed converge to the solution of the stationary problem as the time horizon gets bigger.

As mentioned before, we analyzed the systems where the source function is independent of the time variable. Nevertheless, one can also study a more complex problem where the said function is time-dependent but one should also assume some convergence assumption as $T\to\infty$. One can also impose such assumptions on the desired velocity ${\bu}_D$. Lastly, one can also study where the shape also depends on time, upon which the turnpike property now takes place. Such property has never been theoretically proven for shape design problems but has been numerically illustrated by \cite{lance2019}.

\section*{Acknowledgments}
This work was supported by the Japanese Government (MEXT) Scholarship during the course of the author's doctoral studies. The author would also like to acknowledge Professor Hirofumi Notsu for his insights during the course of this work.


\end{document}